\theoremstyle{plain}
\newtheorem{thm}{Theorem}[section]
\newtheorem{lem}{Lemma}[section]
\newtheorem{remark}{Remark}[section]
\newtheorem{thma}{Theorem}
\theoremstyle{proof}
\numberwithin{equation}{section}
\begin{document} 
\title[Class numbers of imaginary quadratic fields]{Lehmer sequence approach to the divisibility of class numbers of imaginary quadratic fields}
\author{Kalyan Chakraborty and Azizul Hoque}
\address{Kalyan Chakraborty @Kerala School of Mathematics, Kunnamangalam, Kozhikode-673571, Kerala, India}
\email{kalychak@ksom.res.in}
\address{Azizul Hoque @Department of Mathematics, Faculty of Science, Rangapara College, Rangapara-784505, Sonitpur, Assam, India}
\email{ahoque.ms@gmail.com}

\subjclass[2020]{11R29; 11B39; 11E04}

\date{\today}

\keywords{Quadruples of imaginary quadratic fields; Class number; Lehmer sequence; Primitive divisor}

\begin{abstract} Let $k\geq 3$ and $n\geq 3$ be odd integers, and let $m\geq 0$ be any integer. For a prime number $\ell$, we prove that the class number of the imaginary quadratic field $\mathbb{Q}(\sqrt{\ell^{2m}-2k^n})$ is either divisible by $n$ or by a specific divisor of $n$. Applying this result, we construct an infinite family of certain tuples of imaginary quadratic fields of the form $$\left(\mathbb{Q}(\sqrt{d}), \mathbb{Q}(\sqrt{d+1}), \mathbb{Q}(\sqrt{4d+1}), \mathbb{Q}(\sqrt{2d+4}), \mathbb{Q}(\sqrt{2d+16}), \cdots, \mathbb{Q}(\sqrt{2d+4^t})  \right)$$ with $d\in \mathbb{Z}$ and $1\leq 4^t\leq 2|d|$ whose class numbers are all divisible by $n$. Our proofs use some deep results about primitive divisors of Lehmer sequences.  
\end{abstract}
\maketitle{}
\vspace{-1mm}
\section{Introduction}
For an integer $d$, let $h(d)$ denote the class number of the quadratic field $\mathbb{Q}(\sqrt{d})$. Assume that $\ell, k$ and $n$ are positive integers such that 
\begin{equation}\label{eqi1}
\ell^2-\lambda^2k^n=-ds^2,~~~ \gcd(\ell, k)=1, \ell^2<\lambda^2y^n, ~~ \lambda\in\{1,\sqrt{2}, 2\}.
\end{equation} 
Here $d>1$ is a square-free integer and $s$ is a positive integer. One of the basic problems in the theory of quadratic fields is the divisibility of $h(-d)$, which is useful for understanding the structure of the class group of $\mathbb{Q}(\sqrt{-d})$. We direct the reader to the papers  \cite{AC55, BCH19, RM98,  NA22, SO00} (resp. \cite{CH20, HC19}  and \cite{GR78, AH21,  RM98, YA70}), when $\lambda=1$ (resp. $\lambda=\sqrt{2}$ and $\lambda=2$) for more information. 

Throughout this paper, we assume that  $k $ and $n$ are odd positive integers, $\ell$ is a prime and $m$ is a non-negative integer. The purpose of this paper is two-fold. One of its aims is to deeply investigate the $n$-divisibility of class numbers of the family of imaginary quadratic fields $\mathbb{Q}(\sqrt{\ell^{2m}-2k^n})$. The following results on the $n$-divisibility of class numbers of this family of imaginary quadratic fields were proved in \cite{CH20, KS21}. 
\begin{thma}[{\cite[Theorem 1]{CH20}}]\label{thmch}
Let $p$ and $q$ be distinct odd primes, and let $n\geq 3$ be an odd integer with $p^2 < 2q^n$ and $2q^n -p^2\ne \square$. Assume that $3q^{n/3}\ne p + 2$ whenever $3\mid n$. Then the class group of $\mathbb{Q}(\sqrt{p^2-2q^n})$ has an element of order $n$.
\end{thma}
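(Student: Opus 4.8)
The plan is to realise $\mathbb{Q}(\sqrt{p^{2}-2q^{n}})$ as $K=\mathbb{Q}(\sqrt{-d})$, where $2q^{n}-p^{2}=ds^{2}$ with $d$ squarefree and $s\geq 1$, and to exhibit an element of order $n$ inside an explicit cyclic subgroup of its class group. First I would record the arithmetic shape of $K$. Since $p$ and $q$ are odd, $ds^{2}=2q^{n}-p^{2}$ is odd, so $d$ and $s$ are odd, and reducing modulo $4$ gives $d\equiv 1\pmod 4$; hence $-d\equiv 3\pmod 4$, $\mathcal{O}_{K}=\mathbb{Z}[\sqrt{-d}]$, and the hypothesis $2q^{n}-p^{2}\neq\square$ forces $d>1$, so $d\geq 5$ and $\mathcal{O}_{K}^{\times}=\{\pm 1\}$. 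In $\mathcal{O}_{K}$ the prime $2$ ramifies, $(2)=\mathfrak{p}^{2}$, and there is no element of norm $2$, so $\mathfrak{p}$ is non-principal and $[\mathfrak{p}]$ has order exactly $2$; and since $p^{2}\equiv -ds^{2}\pmod q$ with $q\nmid ds$, the element $-d$ is a square mod $q$, so $q$ splits as $(q)=\mathfrak{q}\overline{\mathfrak{q}}$.

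Next I would exploit the principal ideal $(\theta)$ with $\theta=p+s\sqrt{-d}$, $N(\theta)=2q^{n}$. Because $q\nmid\gcd(p,s)$, the primes $\mathfrak{q}$ and $\overline{\mathfrak{q}}$ cannot both divide $(\theta)$, so after relabelling $(\theta)=\mathfrak{p}\,\mathfrak{q}^{n}$. As $p$ and $d$ are odd, $p^{2}-ds^{2}$ is even, so $\alpha:=\theta^{2}/2\in\mathcal{O}_{K}$ with $(\alpha)=\mathfrak{q}^{2n}$, $N(\alpha)=q^{2n}$ and $\alpha-\overline{\alpha}=2ps\sqrt{-d}$. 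Passing to the class group, $[\mathfrak{q}]^{n}=[\mathfrak{p}]$ has order $2$, so the order of $[\mathfrak{q}]$ is $2e$ for some divisor $e$ of $n$, and (since $n$ is odd) $[\mathfrak{q}]^{2}$ has order $e$. It therefore suffices to prove $e=n$.

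Suppose $e<n$. Choose an odd prime $\ell\mid n/e$ (so $\ell\mid n$, $\ell\geq 3$). Writing $\mathfrak{q}^{2e}=(\gamma)$ and using $(\alpha)=(\gamma)^{n/e}$ together with $\mathcal{O}_{K}^{\times}=\{\pm 1\}$ and $n/e$ odd, I may take $\alpha=\gamma^{n/e}$, and then with $\delta:=\gamma^{n/(e\ell)}=a+b\sqrt{-d}$ one has $(\delta)=\mathfrak{q}^{2n/\ell}$, $N(\delta)=q^{2n/\ell}$ and $\alpha=\delta^{\ell}$. Since $(\delta)$ is a pure power of $\mathfrak{q}$ we get $q\nmid a$, so $\gcd((\delta+\overline{\delta})^{2},\delta\overline{\delta})=\gcd(4a^{2},q^{2n/\ell})=1$; moreover $\delta/\overline{\delta}$ is not a root of unity, since otherwise $\delta=\pm\overline{\delta}$, forcing $a=0$ or $b=0$, each contradicting that $(\delta)$ is a nontrivial pure power of the split prime $\mathfrak{q}$. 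Hence $(\delta,\overline{\delta})$ is a Lehmer pair, and, as $\ell$ is odd, its $\ell$-th Lehmer number is
\[
\widetilde{u}_{\ell}(\delta,\overline{\delta})=\frac{\delta^{\ell}-\overline{\delta}^{\ell}}{\delta-\overline{\delta}}=\frac{\alpha-\overline{\alpha}}{\delta-\overline{\delta}}=\frac{2ps\sqrt{-d}}{2b\sqrt{-d}}=\frac{ps}{b},
\]
a rational integer all of whose prime factors divide $ps$; note also $p\nmid d$, $a^{2}+db^{2}=q^{2n/\ell}$, and $p,s<\sqrt{2q^{n}}$ with $ds^{2}=2q^{n}-p^{2}$.

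The final step is to derive a contradiction via the Bilu--Hanrot--Voutier primitive divisor theorem for Lehmer pairs, which forces $\widetilde{u}_{\ell}(\delta,\overline{\delta})$ to have a primitive divisor unless $(\delta,\overline{\delta})$ lies in a short list (finite for $\ell\geq 5$, a one-parameter family for $\ell=3$). For $\ell\geq 5$ one checks that a primitive divisor $P$ would satisfy $P\mid ps$, $P\neq q$, $P\nmid(\delta^{2}-\overline{\delta}^{2})^{2}=-16a^{2}b^{2}d$ and $\ell\mid P-\bigl(\tfrac{-d}{P}\bigr)$, and that this, together with $a^{2}+db^{2}=q^{2n/\ell}$, $p^{2}+ds^{2}=2q^{n}$ and the size bounds above, is incompatible; and that none of the finitely many BHV-exceptional Lehmer pairs is compatible with $N(\delta)=q^{2n/\ell}$ and $q^{n}>p^{2}/2$. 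The genuinely delicate case is $\ell=3$: here the exceptional family corresponds to $\widetilde{u}_{3}=4a^{2}-q^{2n/3}=\pm 3^{j}$, which after reducing $4a^{2}=q^{2n/3}\pm 3^{j}$ modulo $8$ (using $q^{2n/3}\equiv 1\pmod 8$) and substituting into the companion relation $2a\bigl(4a^{2}-3q^{2n/3}\bigr)=2(p^{2}-q^{n})$ leaves only the configuration that unwinds to $3q^{n/3}=p+2$, which is excluded by hypothesis. Hence $e=n$, so $[\mathfrak{q}]^{2}$ has order $n$, proving the theorem. I expect this last paragraph to be the main obstacle: pinning down the finite and parametric families of exceptional Lehmer pairs and verifying that exactly the hypotheses $2q^{n}-p^{2}\neq\square$ and $3q^{n/3}\neq p+2$ eliminate the surviving cases is case-heavy, and the bookkeeping is complicated by the ramified prime $\mathfrak{p}$ above $2$, which is why one is forced to work with $\theta^{2}/2$ and with the index $2e$ rather than with $\mathfrak{q}^{n}$ directly.
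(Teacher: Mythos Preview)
Your overall strategy---factor $(\theta)=\mathfrak{p}\mathfrak{q}^{n}$, pass to a Lehmer pair, invoke Bilu--Hanrot--Voutier---matches the Lehmer-sequence method the paper develops for its Theorem~\ref{thm} (Theorem~\ref{thmch} itself is only quoted here; in \cite{CH20} it was proved via the Bugeaud--Shorey theorem on generalised Ramanujan--Nagell equations). But the specific Lehmer pair you choose creates a genuine gap, not merely heavy bookkeeping.

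With the conjugate pair $(\delta,\overline{\delta})$ you obtain $\widetilde u_{\ell}=ps/b$. For BHV to yield a contradiction you must first show that $\widetilde u_{\ell}$ has \emph{no} primitive divisor, i.e.\ that every prime dividing $ps/b$ already divides $(\delta^{2}-\overline{\delta}^{2})^{2}=-16a^{2}b^{2}d$ or some earlier $\widetilde u_{j}$. You know $p\nmid d$, and nothing forces $p\mid a$ or $p\mid b$; the odd primes dividing $s$ are likewise unconstrained. So the ``incompatibility'' you assert for $\ell\ge5$ is simply not there: $p$ (or a prime factor of $s$) may perfectly well be a primitive divisor of $\widetilde u_{\ell}$, in which case BHV gives no information and no contradiction arises. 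Your $\ell=3$ analysis inherits the same defect, since the reduction to $3q^{n/3}=p+2$ presupposes that the only obstruction is the defective-pair list, which you have not earned.

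The paper sidesteps this by a different choice of pair. It works with $\theta/\sqrt{2}$ rather than $\theta^{2}/2$, and takes
\[
\alpha=\frac{u+v\sqrt{-d}}{\sqrt{2}},\qquad \beta=\frac{-u+v\sqrt{-d}}{\sqrt{2}}
\]
(note: $\beta=-\overline{\alpha}$, \emph{not} the complex conjugate). The parameters are then $(-2dv^{2},2u^{2})$ and, crucially, $|\mathcal{L}_{t}|=\ell^{m}/u$. Since this is an integer, $u=\ell^{m_{1}}$ is a power of the single prime $\ell$ (here $\ell=p$, $m=1$), so $|\mathcal{L}_{t}|=\ell^{m-m_{1}}$ is a pure prime power; and once $m_{1}\ge1$ that prime already divides $(\alpha^{2}-\beta^{2})^{2}=-4u^{2}v^{2}d$, so $\mathcal{L}_{t}$ has no primitive divisor automatically. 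Your squaring step $\alpha=\theta^{2}/2$ is precisely what injects the extraneous factor $s$ into the Lehmer number and destroys this mechanism. The residual case $u=1$ is then handled by a short congruence argument combined with a recursion to the $m=0$ situation, and it is there that the hypothesis $3q^{n/3}\ne p+2$ enters cleanly.
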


\begin{thma}[{\cite[Theorem 3]{KS21}}]\label{thmks}
For prime numbers $p,q \geq 3$ and $k= q^r, r \in\mathbb{N}$ such that $\mathbb{Q}(\sqrt{1-2k^p})\ne \mathbb{Q}(i)$, the class number of $\mathbb{Q}(\sqrt{1 - 2k^p})$ is divisible by $p$. Moreover, this family of fields has infinitely many members each with class number divisible by $p$.  
\end{thma}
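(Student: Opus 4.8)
The plan is to translate the divisibility into a statement about the order of a prime ideal above $q$ in the class group of $K=\mathbb{Q}\bigl(\sqrt{1-2k^{p}}\bigr)$, and then to determine that order with the help of the primitive divisor theorem for Lehmer sequences.

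First I would write $2k^{p}-1=ds^{2}$ with $d>1$ square-free and $s\geq 1$; the condition $d\neq 1$ is exactly the hypothesis $K\neq\mathbb{Q}(i)$, and since $k$ is odd one checks $d\equiv 1\pmod 4$, so $\mathcal{O}_{K}=\mathbb{Z}[\sqrt{-d}]$ and $(2)=\mathfrak{p}_{2}^{2}$ with $N(\mathfrak{p}_{2})=2$. From the factorisation
\[
(1+s\sqrt{-d})(1-s\sqrt{-d})=(2)\,(k^{p})=\mathfrak{p}_{2}^{2}\,(q)^{rp}
\]
together with the fact that the greatest common divisor of the two conjugate factors divides $(2)$ but is not all of $\mathfrak{p}_{2}^{2}$ (their sum is $2$ and their difference $2s\sqrt{-d}$, while $v_{2}(N(1+s\sqrt{-d}))=v_{2}(2k^{p})=1$), one deduces that $q$ splits, $(q)=\mathfrak{q}\bar{\mathfrak{q}}$, and
\[
(1+s\sqrt{-d})=\mathfrak{p}_{2}\,\mathfrak{q}^{\,rp}.
\]
Hence, writing $[\,\cdot\,]$ for the ideal class, $[\mathfrak{q}]^{rp}=[\mathfrak{p}_{2}]$; since $N(\alpha)=2$ is insoluble in $\mathcal{O}_{K}$ when $d>2$, the class $[\mathfrak{p}_{2}]$ has order exactly $2$, so the order $e$ of $[\mathfrak{q}]$ divides $2rp$ but not $rp$.

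The heart of the proof is to show that $\mathfrak{q}^{j}$ is non-principal for every $1\leq j\leq rp-1$; combined with $e\mid 2rp$ this forces $e=2rp$, whence $2rp\mid h(-d)$ and in particular $p\mid h(-d)$. If $\mathfrak{q}^{j}=(a+b\sqrt{-d})$ then $a^{2}+db^{2}=q^{j}$, and $b\neq 0$ because $\mathfrak{q}\neq\bar{\mathfrak{q}}$; since $d=(2q^{rp}-1)/s^{2}$, this gives $q^{j}\geq d>q^{rp}/s^{2}$, which is impossible for $j\leq rp-1$ as soon as $s^{2}\leq q$. To cover arbitrary $s$ I would, in the spirit of Theorem~\ref{thmch}, attach to the pair $\alpha=1+s\sqrt{-d}$, $\bar{\alpha}=1-s\sqrt{-d}$ (after factoring out the common $\sqrt 2$) the associated Lehmer pair, observe that a relation of the form $\mathfrak{q}^{j}=\mathfrak{p}_{2}^{\varepsilon}\cdot(\text{principal})$ with $j<rp$ and $\varepsilon\in\{0,1\}$ forces the $j$-th Lehmer number of this pair to possess no primitive divisor, and invoke the Bilu--Hanrot--Voutier theorem to reduce to a short explicit list of possible $j$. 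These remaining cases are then ruled out by a finite computation, using $p,q\geq 3$ and the fact that $\ell^{2m}=1$ keeps $2k^{p}-1$ out of the exceptional ranges, just as in \cite{CH20,KS21}. This last elimination is the main obstacle: one must build a genuine Lehmer pair from the awkward identity $2q^{rp}=1+ds^{2}$ uniformly in $r$, and carefully match ``$\mathfrak{q}^{j}$ is almost principal'' with ``the $j$-th Lehmer number lacks a primitive divisor'' so that Bilu--Hanrot--Voutier and the finite check can be applied.

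Finally, for the assertion that infinitely many such fields occur, I would fix $r=1$ and let $q$ range over the odd primes. If only finitely many square-free parts $d$ arose, some fixed $d$ would satisfy $dY^{2}=2X^{p}-1$ for infinitely many prime values $X=q$; but for odd $p\geq 3$ the affine curve $dY^{2}=2X^{p}-1$ has genus $\lfloor (p-1)/2\rfloor\geq 1$, so by Siegel's theorem it carries only finitely many integral points, a contradiction. Hence the fields $\mathbb{Q}\bigl(\sqrt{1-2k^{p}}\bigr)$ are infinitely many, each with class number divisible by $p$ by the first part.
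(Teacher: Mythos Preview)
Your strategy---ideal factorisation followed by Bilu--Hanrot--Voutier---is the paper's strategy too (Theorem~\ref{thm}(i), the case $m=0$, of which Theorem~\ref{thmks} is the special case $k=q^{r}$, $n=p$), but the step you flag as ``the main obstacle'' is a genuine gap, and the bridge you are missing is the paper's Lemma~\ref{lemBS}. You propose to attach a Lehmer pair to $(1+s\sqrt{-d})/\sqrt{2}$ and its conjugate, and then read ``$\mathfrak{q}^{j}=\mathfrak{p}_{2}^{\varepsilon}\cdot(\text{principal})$'' as ``$\mathcal{L}_{j}$ has no primitive divisor''. There is no such correspondence: an almost-principality relation $a^{2}+db^{2}=2q^{j}$ concerns an a~priori unrelated pair $(a,b)$ and places no constraint on the fixed integers $\mathcal{L}_{j}\bigl((1+s\sqrt{-d})/\sqrt 2,\,(1-s\sqrt{-d})/\sqrt 2\bigr)$.

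What the paper does instead is prove (Lemma~\ref{lemBS}) that if $z_{1}$ is the least exponent for which $\mathfrak{p}_{2}I^{z_{1}}$ is principal---here $N(I)=k$, not $q$---with generator $u+v\sqrt{-d}$, then one has an \emph{identity of elements}
\[
\frac{1+s\sqrt{-d}}{\sqrt{2}}=\lambda_{1}\Bigl(\frac{u+\lambda_{2}v\sqrt{-d}}{\sqrt{2}}\Bigr)^{t},\qquad t=\frac{n}{z_{1}}\ \text{odd},\qquad z_{1}\mid h(-d).
\]
The correct Lehmer pair is $\bigl((u+\lambda_{2}v\sqrt{-d})/\sqrt 2,\,(-u+\lambda_{2}v\sqrt{-d})/\sqrt 2\bigr)$, built from the \emph{minimal} solution $(u,v)$, and its $t$-th Lehmer number is $\pm 1/u$; since this must be an integer, $u=1$ and $|\mathcal{L}_{t}|=1$ has no primitive divisor. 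Now BHV gives $t\le 30$, the parameter shape $(-2dv^{2},2)$ together with Lemmas~\ref{lemVO}--\ref{lemHO} excludes $5\le t\le 29$, and $t=3$ is eliminated by the elementary equation $1-3dv^{2}=\pm 2$. Hence $t=1$, $z_{1}=n$, and $n\mid h(-d)$. Note also that tracking $I$ of norm $k=q^{r}$, rather than your $\mathfrak{q}$ of norm $q$, is what makes the argument uniform in $r$ and delivers $p\mid h(-d)$ directly, without aiming for your stronger target $2rp\mid h(-d)$. Your Siegel-theorem argument for the infinitude assertion is correct.
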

\begin{remark}
The equation $x^2+1=2y^n$ has no integer solution for odd $y>1 $ and odd $ n>1$ (see \cite[p. 65]{BS01}). Thus,  the condition `$\mathbb{Q}(\sqrt{1-2k^p})\ne \mathbb{Q}(i)$' is not required in Theorem \ref{thmks}.   
\end{remark}
Here, we prove the following general result.  
\begin{thm}\label{thm}
Let $n\geq 3$ and $k\geq 3$ be odd integers and $m\geq 0$ any integer. Assume that $\ell$ is a prime number such that $\gcd(k, \ell)=1$ and $\ell^{2m}<2k^n$. If $-d$ is the square-free part of $\ell^{2m}-2k^n$, then the following statements hold. 
\begin{itemize}
\item[(i)] If $m=0$, then $h(-d)\equiv 0\pmod n$.
\item[(ii)] If $\ell=2$ and $m\geq 1$, then \vspace{1mm}\\
$h(-d)\equiv\begin{cases} 
0\pmod n \hspace{1mm} \text{ if } 3\nmid n, \text{ or } k\equiv 1\pmod 4,\\
0 \pmod n \hspace{1mm} \text{ if } k\equiv 3\pmod 4, 3\mid n \text{ and } \frac{\ell^{2m}-2k^{\frac{n}{3}}}{-d}\ne\square,\\
0 \pmod {\frac{n}{3^j}} \hspace{1mm} \text{ if } k\equiv 3\pmod 4, 3^{j+1}\mid n \text{ and } \frac{\ell^{2m}-2k^{\frac{n}{3^{j+1}}}}{-d}\ne\square, j=0,1,\cdots.\\
\end{cases}$\vspace{1mm}
\item[(iii)] If $\ell>2$, $m\geq 1$ and $d>1$, then for a divisor $t$ of $n$, we have\vspace{1mm} \\
$h(-d)\equiv \begin{cases}
0\pmod n \text{ if } k\equiv 3\pmod 4,\\
0\pmod n \text{ if } k\equiv 1 \pmod4, \frac{1-2k^{\frac{n}{t}}}{-d}\ne\square ~~ (\text{resp. } \frac{\ell^{2m}-2k^{\frac{n}{t}}}{-d}\ne \square) \text{ with } \\
\hspace{21mm}t\ne 3~~ (\text{resp. }t=3),\\
0\pmod {\frac{n}{t}}  \text{ if } k\equiv 1 \pmod4,\frac{1-2k^{\frac{n}{t}}}{-d}=\square ~~ (\text{resp. } \frac{\ell^{2m}-2k^{\frac{n}{t}}}{-d}= \square) \text{ with } \\
\hspace{21mm} t\ne 3~~(\text{resp. }t=3).\\
\end{cases}$
\end{itemize}
\end{thm}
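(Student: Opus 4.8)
The plan is to treat the three parts uniformly, first reducing to a statement about the order of an ideal class and then to a question about primitive divisors of a Lehmer sequence. Write $2k^{n}-\ell^{2m}=ds^{2}$ with $d$ square-free and $s\ge 1$; set $K=\mathbb{Q}(\sqrt{-d})$, and note that $d>1$ in the relevant range (for $m=0$ this is exactly the unsolvability of $x^{2}+1=2y^{n}$ recalled in the Remark, and $h(-d)=1$ for $d\le 3$). First I would run a short congruence analysis modulo $8$, using $k^{n}\equiv k\pmod 8$ (valid since $k$ and $n$ are odd), to see that $d\equiv 1\pmod 4$ when $\ell^{2m}$ is odd (parts (i) and (iii)) and $d\equiv 2\pmod 4$ when $\ell=2$, $m\ge 1$ (part (ii)); in every case $\mathcal{O}_{K}=\mathbb{Z}[\sqrt{-d}]$ and $2$ ramifies, say $(2)=\mathfrak{p}^{2}$. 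Since $\gcd(d,k)=\gcd(\ell,k)=1$, every rational prime dividing $k$ splits in $K$, and $\theta:=\ell^{m}+s\sqrt{-d}$, which has norm $\ell^{2m}+ds^{2}=2k^{n}$, is prime to all of them, and also to $2$ because $s$ is odd. Hence $\gcd\bigl((\theta),(\bar\theta)\bigr)=\mathfrak{p}$, and unique factorization of ideals together with the oddness of $n$ yields $(\theta)=\mathfrak{p}\,\mathfrak{b}^{n}$ for an ideal $\mathfrak{b}$ with $N(\mathfrak{b})=k$. Then $\mathfrak{A}:=\mathfrak{p}\mathfrak{b}$ satisfies $\mathfrak{A}^{n}=\bigl(2^{(n-1)/2}\theta\bigr)$, so the class $c:=[\mathfrak{A}]\in\mathrm{Cl}(K)$ has order $e$ dividing $n$, and $h(-d)\equiv 0\pmod e$. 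Everything then reduces to showing that $e=n$ outside the stated exceptions.

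Suppose $e<n$ and pick a prime $p\mid n/e$, so $\mathfrak{A}^{n/p}$ is principal. Clearing the ramified factor $\mathfrak{p}$ — equivalently, passing to $\sigma:=\theta^{2}/2=(\ell^{2m}-k^{n})+\ell^{m}s\sqrt{-d}$, for which $(\sigma)=\mathfrak{b}^{2n}$ and $N(\sigma)=k^{2n}$ — one obtains $\eta\in\mathcal{O}_{K}$ with
\[
\eta^{p}=\pm\sigma=\pm\bigl((\ell^{2m}-k^{n})+\ell^{m}s\sqrt{-d}\bigr),\qquad N(\eta)=k^{2n/p},\qquad (\eta)=\mathfrak{b}^{2n/p}.
\]
I would then check that $(\eta,\bar\eta)$ is a Lehmer pair: $\gcd\bigl((\eta+\bar\eta)^{2},\eta\bar\eta\bigr)=1$ because $\eta\bar\eta=k^{2n/p}$ is odd and prime to $(\eta-\bar\eta)^{2}$, and $\eta/\bar\eta$ is not a root of unity since $\mathfrak{b}\neq\bar{\mathfrak{b}}$ and $\eta$ is not real. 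Writing $\eta-\bar\eta=w\sqrt{-d}$, the $p$-th term of the associated Lehmer sequence is
\[
\tilde u_{p}(\eta,\bar\eta)=\pm\frac{\sigma-\bar\sigma}{\eta-\bar\eta}=\pm\frac{2\ell^{m}s}{w},
\]
whose prime divisors all lie among those of $2\ell s$; comparing with $N(\eta)=k^{2n/p}$ and with $(\eta^{2}-\bar\eta^{2})^{2}=-w^{2}d\,(4k^{2n/p}-w^{2}d)$, the very rigid shape of $\sigma$ should force every such prime to divide $(\eta^{2}-\bar\eta^{2})^{2}$, i.e.\ $\tilde u_{p}(\eta,\bar\eta)$ has no primitive divisor.

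The next step is to invoke the Bilu--Hanrot--Voutier theorem: a Lehmer number $\tilde u_{p}$ with $p>30$ always has a primitive divisor, so $p\le 30$, and being an odd prime dividing $n$ it lies in $\{3,5,7,\dots,29\}$. I would then go through Voutier's complete list of Lehmer pairs whose low-index terms lack primitive divisors and confront each candidate with the relation $\eta^{p}=\pm\sigma$ and the constraint $N(\eta)=k^{2n/p}$: for $p\in\{5,7,\dots,29\}$ no candidate should survive, so $e<n$ can occur only with $n/e$ a power of $3$; a closer inspection of the $p=3$ families should show that the descent continues exactly one further step precisely when $\frac{\ell^{2m}-2k^{n/t}}{-d}$ (respectively $\frac{1-2k^{n/t}}{-d}$, according as $t=3$ or $t\neq 3$) is a perfect square, where $t=n/e$; this is exactly the trichotomy in parts (ii) and (iii). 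When $3\nmid n$, or $k\equiv 3\pmod 4$, or the relevant expression is a non-square, the exceptional family is empty, so $e=n$; and for $m=0$ the $p=3$ configuration is additionally ruled out by the unsolvability of $x^{2}+1=2y^{n}$, whence $e=n$ unconditionally and part (i) follows.

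The step I expect to be the main obstacle is that last one: deciding, among the finitely many exceptional Lehmer pairs of index at most $30$, exactly which ones are compatible with the very restrictive equation $\eta^{p}=\pm\bigl((\ell^{2m}-k^{n})+\ell^{m}s\sqrt{-d}\bigr)$, and keeping careful track of how often the $p=3$ descent can be iterated — this is precisely where the square conditions and the exponents $\frac{n}{3^{j}}$ and $\frac{n}{t}$ in the statement come from. By contrast, the preliminary $2$-adic and ramification analysis together with the factorization $(\theta)=\mathfrak{p}\,\mathfrak{b}^{n}$ are routine, even though they have to be carried out separately in the three regimes $m=0$, $\ell=2$, and $\ell$ an odd prime.
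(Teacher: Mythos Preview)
Your plan shares the paper's architecture --- ideal factorisation, a Lehmer pair, Bilu--Hanrot--Voutier, then the tables for small indices --- but the choice of Lehmer pair is different, and this is where a genuine gap appears.

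The paper does not square $\theta$ to stay in $\mathcal{O}_K$. Instead, via its Lemma~3.1 it obtains $(u,v,n_1)$ with $u^{2}+dv^{2}=2k^{n_1}$, $n_1\mid h(-d)$, $n=n_1t$, and the relation $\theta/\sqrt{2}=\lambda_1\bigl((u+\lambda_2 v\sqrt{-d})/\sqrt{2}\bigr)^{t}$. It then takes
\[
\alpha=\frac{u+\lambda_2 v\sqrt{-d}}{\sqrt{2}},\qquad \beta=-\bar\alpha=\frac{-u+\lambda_2 v\sqrt{-d}}{\sqrt{2}},
\]
a Lehmer pair with parameters $(-2dv^{2},\,2u^{2})$. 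The point of the sign flip $\beta=-\bar\alpha$ (rather than $\bar\alpha$) is that $\alpha-\beta=\sqrt{2}\,u$, so
\[
|\mathcal{L}_t(\alpha,\beta)|=\frac{|\alpha^{t}-\beta^{t}|}{|\alpha-\beta|}=\frac{\ell^{m}}{u}.
\]
This is a pure power of $\ell$ (one shows $u=\ell^{m_1}$), and $\ell\mid 2u^{2}=(\alpha-\beta)^{2}$ whenever $u>1$; hence $\mathcal{L}_t$ has no primitive divisor, \emph{immediately}. BHV and the tables then force $t\in\{1,3\}$, and the paper finishes with short congruence arguments mod $3$, $4$, $8$ exactly as you anticipate.

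Your pair $(\eta,\bar\eta)$, obtained after squaring, yields $\tilde u_{p}=\pm \ell^{m}s/b$, and here the extra factor $s$ is the problem. In fact, with $\eta=\alpha^{2}$ one has $b=uv$ and $\tilde u_{p}=\pm(\ell^{m}/u)\cdot(s/v)$: the second factor is the Lehmer number of the \emph{other} pair $(\alpha,\bar\alpha)$, which generically \emph{does} have primitive divisors. Concretely, a prime $q\mid s/v$ with $q\nmid 2uvd(u^{2}-dv^{2})$ would be a primitive divisor of $\tilde u_{p}(\eta,\bar\eta)$, and nothing in your set-up rules this out. Your sentence ``the very rigid shape of $\sigma$ should force every such prime to divide $(\eta^{2}-\bar\eta^{2})^{2}$'' is exactly the step that fails; the rigidity is not there once $s$ enters the picture. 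The fix is not to square at all: work with $\theta/\sqrt{2}$ and take $\beta=-\bar\alpha$, so that the Lehmer number picks out the real part $\ell^{m}$ instead of the imaginary part $s$. After that, the endgame (ruling out $t=5$ via the Fibonacci/Lucas identities, analysing $t=3$ modulo $3$ and $4$ to get \eqref{eqt10}, and iterating when the square condition holds) is precisely what you sketched.
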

\begin{remark} \label{rmk2}We note down some important comments on Theorem \ref{thm}. 
\begin{itemize}
\item[(I)] The class number of  $\mathbb{Q}(\sqrt{1-2k^n})$ is divisible by any odd integer $n\geq 3$ for any odd integer $k>1$. This gives a general version of Theorem \ref{thmks}. This family of fields has infinitely many members each with class number divisible by $n$.  
\item[(II)]  The class number of the imaginary quadratic  field $\mathbb{Q}(\sqrt{2^{2m}-2k^n})$ is divisible by any odd integer $n>1$ when $k\equiv 1\pmod 4$ or $3\nmid n$. It is not hard to prove that there are infinitely many such fields with $n$-divisibility property. The proof of this fact goes along the same lines of that of Theorem 1.2 in \cite{CH18}. 
\end{itemize}
\end{remark}

In \cite{IZ18}, Iizuka proved that there are infinitely many pairs of imaginary quadratic fields $\mathbb{Q}(\sqrt{d})$ and $\mathbb{Q}(\sqrt{d+1})$ with $d \in \mathbb{Z}$ whose class numbers are divisible by $3$. This result has been extended to certain triples of imaginary quadratic fields by Chattopadhyay and Muthukrishnan \cite{CM21}. On the other hand,  Krishnamoorthy and Pasupulati \cite{KS21} proved that there are infinitely many pairs of imaginary quadratic fields $\mathbb{Q}(\sqrt{d})$ and $\mathbb{Q}(\sqrt{d+1})$ with $d \in \mathbb{Z}$ whose class numbers are divisible by a given prime $p\geq 3$. Very recently in \cite{H21}, the second author constructed an infinite family of quadruples of imaginary quadratic fields of the form $\left(\mathbb{Q}(\sqrt{d}), \mathbb{Q}(\sqrt{d+1}), \mathbb{Q}(\sqrt{d+4}), \mathbb{Q}(\sqrt{d+4p^2})\right)$with $d\in\mathbb{Z}$ and $p$ a prime, whose class numbers are all divisible by a given odd integer $n\geq 3$. Another aim of this paper is to construct an infinite family of 
tuples of imaginary quadratic fields of the form {\small
$$
\left(\mathbb{Q}(\sqrt{d}), \mathbb{Q}(\sqrt{d+1}), \mathbb{Q}(\sqrt{4d+1}),  \mathbb{Q}(\sqrt{2d+4}), \mathbb{Q}(\sqrt{2d+16}), \cdots, \mathbb{Q}(\sqrt{2d+4^m})  \right)$$}with $d\in\mathbb{Z}$ and $1\leq 4^m\leq 2|d|$ whose class numbers are all divisible by a given odd integer $n\geq 3$.  More precisely, we prove Theorem \ref{thm3} in \S\ref{S5}.

The proof of Theorems \ref{thmks} and \ref{thmch} are based upon a result of Bugeaud and Shorey \cite[Theorem 2]{BS01} on positive integer solutions of a Ramanujan-Nagell type equation. This method is applicable only when $k$ is a power of a prime, and thus we can not apply the same in the proof of Theorem \ref{thm}.  Our method is based on some remarkable results of Bilu, Hanrot and Voutier \cite{BH01, VO95} on the existence of primitive divisors of Lehmer sequences.

\section{Primitive divisors of Lehmer sequences}
A   \textit{Lehmer pair} is a pair $(\alpha, \beta)$ of algebraic integers  such that $(\alpha + \beta)^2$ and $\alpha\beta$ are non-zero coprime rational integers and $\alpha/\beta$ is not a root of unity. For a Lehmer pair $(\alpha, \beta)$, one defines the corresponding sequence of \textit{Lehmer numbers} by 
$$\mathcal{L}_n(\alpha, \beta)=\begin{cases}
\dfrac{\alpha^n-\beta^n}{\alpha-\beta}, & \text{ if $n$ is odd,} \vspace{1mm}\\
\dfrac{\alpha^n-\beta^n}{\alpha^2-\beta^2}, & \text{ if $n$ is even}.
\end{cases}$$
Note that all Lehmer numbers are non-zero rational integers. A prime divisor $p$ of $\mathcal{L}_n(\alpha, \beta)$ is \textit{primitive} if $p$ does not divide $(\alpha^2-\beta^2)^2
\mathcal{L}_1(\alpha, \beta) \mathcal{L}_2(\alpha, \beta) \cdots \mathcal{L}_{n-1}(\alpha, \beta)$. We shall make use of the following classical result of  Bilu, Hanrot and Voutier \cite[Theorem 1.4]{BH01}.
 
\begin{thma}\label{thmBH}
Let $(\alpha, \beta) $ be a Lehmer pair. Then $\mathcal{L}_n (\alpha, \beta) $ has a primitive divisor for any integer $n>30$. 
\end{thma}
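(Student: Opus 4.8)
The plan is to follow the cyclotomic ``primitive part'' strategy that underlies this classification, pitting an algebraic upper bound against an analytic lower bound. First I would introduce the homogenized cyclotomic values
\[
\Phi_n(\alpha,\beta)=\prod_{\substack{1\le k\le n\\ \gcd(k,n)=1}}(\alpha-\zeta_n^k\beta),\qquad \zeta_n=e^{2\pi i/n},
\]
which is a nonzero rational integer and satisfies the factorization $\alpha^n-\beta^n=\prod_{d\mid n}\Phi_d(\alpha,\beta)$; consequently $\Phi_n(\alpha,\beta)$ is, up to the factor $(\alpha^2-\beta^2)^2$ excluded in the definition of a primitive divisor, exactly the primitive part of the Lehmer number $\mathcal{L}_n(\alpha,\beta)$. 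The first step is the standard divisibility lemma (in the spirit of Schinzel and Stewart): if a prime $p$ divides $\Phi_n(\alpha,\beta)$ but is \emph{not} a primitive divisor of $\mathcal{L}_n(\alpha,\beta)$, then $p\mid n$, in fact $p$ must equal the largest prime factor $P(n)$ of $n$, and moreover $p$ divides $\Phi_n(\alpha,\beta)$ to at most the first power. Hence, assuming for contradiction that $\mathcal{L}_n(\alpha,\beta)$ has no primitive divisor at all, the integer $\Phi_n(\alpha,\beta)$ can only be built out of $P(n)$ to the first power, so that $|\Phi_n(\alpha,\beta)|\le P(n)\le n$.

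Against this upper bound I would set an Archimedean lower bound. Writing $\gamma=\alpha/\beta$ (not a root of unity), one has $|\Phi_n(\alpha,\beta)|=|\beta|^{\varphi(n)}\prod_{\gcd(k,n)=1}|\gamma-\zeta_n^k|$. Since $\alpha\beta$ is a nonzero rational integer, $|\beta|$ is bounded away from $0$, so the only way this quantity can be as small as the upper bound $|\Phi_n|\le n$ for large $n$ is for some single factor $|\gamma-\zeta_n^k|$ to be extraordinarily small, i.e.\ for $\gamma$ to be an unusually good approximation to an $n$-th root of unity. When $\alpha,\beta$ are real this cannot happen: the factors are bounded below in terms of $|\,|\gamma|-1\,|$ and the product grows. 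The essential difficulty is therefore the complex case, in which $\alpha=\bar\beta$, $|\gamma|=1$, and the condition becomes literally that $\arg\gamma$ is close to $2\pi k/n$.

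The crux, and the main obstacle, is thus to bound from below how well $\gamma$ can approximate a root of unity. I would estimate $|\gamma-\zeta_n^k|$ through the linear form $|n\log\gamma-2\pi i M|$ for a suitable integer $M$ (a $\mathbb{Z}$-linear combination of the two logarithms $\log\gamma$ and $2\pi i$) and invoke a sharp lower bound for linear forms in two logarithms of Laurent--Mignotte--Nesterenko type. This yields an inequality of the shape $\log|\gamma-\zeta_n^k|\ge -c\,(\log n)\,\mathrm{h}(\gamma)$ with an explicit absolute constant $c$, where $\mathrm{h}$ denotes the Weil height, and hence $\log|\Phi_n(\alpha,\beta)|\ge -c'(\log n)\,\mathrm{h}(\gamma)$. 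Comparing this with $\log|\Phi_n(\alpha,\beta)|\le\log n$ from the first step, and bounding $\mathrm{h}(\gamma)$ in terms of the integers $(\alpha+\beta)^2$ and $\alpha\beta$, forces $n$ below an explicit absolute constant $n_0$. This transcendence input is precisely what makes the cutoff effective and is by far the hardest and least elementary ingredient.

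Finally I would dispose of the finitely many surviving values $n\le n_0$. For each such $n$ the comparison above, sharpened by a continued-fraction or LLL reduction of the linear-form estimate, cuts the admissible Lehmer pairs $(\alpha,\beta)$ down to a short explicit list that can be searched directly; combining this with the explicit computation of all $\mathcal{L}_n(\alpha,\beta)$ for $n\le 30$, one verifies that every pair failing to possess a primitive divisor in fact has $n\le 30$. This rules out any exception with $n>30$ and establishes the stated theorem.
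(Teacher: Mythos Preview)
The paper does not prove this statement at all: it is simply quoted as the classical result of Bilu, Hanrot and Voutier \cite[Theorem~1.4]{BH01} and used as a black box in the proof of Theorem~\ref{thm}. There is therefore no ``paper's own proof'' to compare your proposal against.

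That said, your outline is a faithful high-level sketch of the actual Bilu--Hanrot--Voutier argument: the homogenized cyclotomic value $\Phi_n(\alpha,\beta)$ as the primitive part, the Schinzel--Stewart type lemma showing that any non-primitive prime divisor of $\Phi_n(\alpha,\beta)$ must equal the largest prime factor $P(n)$ of $n$ and occur to the first power (so $|\Phi_n(\alpha,\beta)|\le P(n)\le n$ if no primitive divisor exists), the Archimedean lower bound via a sharp linear-forms-in-two-logarithms estimate of Laurent--Mignotte--Nesterenko type, and the final computational reduction for the surviving small $n$. If fully executed, this would reproduce the cited reference rather than anything done in the present paper.
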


A Lehmer pair $(\alpha, \beta)$ such that $\mathcal{L}_n(\alpha, \beta)$ does not have a primitive divisor is called \textit{$n$-defective}.  
Two Lehmer pairs $(\alpha_1, \beta_1)$ and $(\alpha_2, \beta_2)$ satisfying $\alpha_1/\alpha_2=\beta_1/\beta_2\in \{\pm 1, \pm\sqrt{-1} \}$ are called \textit{equivalent}. 
For a Lehmer pair $(\alpha, \beta)$, assume that $a=(\alpha+\beta)^2$ and $b=(\alpha-\beta)^2$. Then $\alpha=(\sqrt{a}\pm\sqrt{b})/2$ and $\beta=(\sqrt{a}\mp\sqrt{b})/2$. The pair $(a, b)$ is called the \textit{parameters} corresponding to the Lehmer pair $(\alpha, \beta)$. We extract the next lemma  from \cite[Table 2]{BH01}, which originally appeared in  \cite[Theorem 1]{VO95}.
\begin{lem}\label{lemVO}
Let $n$ be an odd integer such that $7\leq n\leq 29$. Then, up to equivalence, all $n$-defective Lehmer pairs $(\alpha, \beta)$ with parameters $(a, b)$ are given by:
\begin{itemize}
\item[(i)] $(a, b)=(1,-7), (1, -19), (3, -5), (5, -7), (13, -3), (14, -22)$ when $n=7$;
\item[(ii)] $(a,b) = (5,-3), (7,-1),(7,-5)$ when $n=9$;
\item[(iii)] $ (a, b)=(1,-7)$ when $n=13$;
\item[(iv)] $(a,b) = (7,-1),(10,-2)$ when $n=15$. 
\end{itemize}
\end{lem}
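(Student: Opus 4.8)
The plan is to combine Theorem \ref{thmBH} with the classical cyclotomic theory of Lehmer numbers, reducing the classification to a finite computation at each of the twelve odd indices $n \in \{7, 9, \dots, 29\}$. First I observe that Theorem \ref{thmBH} already disposes of the tail: every Lehmer pair has a primitive divisor at index $n$ once $n > 30$, so no $n$-defective pair can exist there, and the classification is genuinely a statement about $n \le 30$. I also record that every pair appearing in the table has $b < 0$. Since $\alpha,\beta$ are the roots of $z^2 - \sqrt{a}\,z + (a-b)/4$ whose discriminant is $b$, this means we are hunting for \emph{complex} Lehmer pairs, with $|\alpha| = |\beta| = \sqrt{Q}$ where $Q = \alpha\beta = (a-b)/4 > 0$; this is the regime in which the estimates below are cleanest.

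The second step converts the defectiveness hypothesis into a size bound. Writing $\zeta = e^{2\pi i/n}$ and using the homogeneous cyclotomic value $\Phi_n(\alpha,\beta) = \prod_{\gcd(k,n)=1}(\alpha - \zeta^k\beta)$, one has the factorization $\mathcal{L}_n(\alpha,\beta) = \prod_{d\mid n,\, d>1}\Phi_d(\alpha,\beta)$ (for odd $n$ the Lehmer and Lucas normalizations agree), together with the classical theorem of Lucas, Carmichael and Lehmer: if a prime $p$ divides $\Phi_n(\alpha,\beta)$ but is not a primitive divisor of $\mathcal{L}_n(\alpha,\beta)$, then $p$ is the largest prime factor $P$ of $n$ and $p \parallel \Phi_n(\alpha,\beta)$. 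Hence being $n$-defective forces the upper bound $|\Phi_n(\alpha,\beta)| \le P$.

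The third step supplies the matching lower bound. Writing $\alpha = \sqrt{Q}\,e^{i\theta}$ and $\beta = \sqrt{Q}\,e^{-i\theta}$, each factor satisfies $|\alpha - \zeta^k\beta|^2 = 2Q\bigl(1 - \cos(2\pi k/n - 2\theta)\bigr)$, so $|\Phi_n(\alpha,\beta)| = Q^{\phi(n)/2}\prod_{\gcd(k,n)=1}|2\sin(\pi k/n - \theta)|$. Since $Q = (a-b)/4 = (a+|b|)/4$, the inequality $|\Phi_n(\alpha,\beta)| \le P$ bounds both $a$ and $|b|$ from above as soon as the trigonometric product is bounded away from $0$. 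This confines the parameters to an explicit finite box for each $n$, and a direct search through the remaining candidates, subject to the coprimality constraint $\gcd(a,(a-b)/4)=1$, isolates exactly the defective pairs: the six pairs at $n=7$, three at $n=9$, one at $n=13$, two at $n=15$, and none for $n \in \{11,17,19,21,23,25,27,29\}$.

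The main obstacle is precisely the caveat in the previous step: the trigonometric product $\prod|2\sin(\pi k/n - \theta)|$ becomes small exactly when $\alpha/\beta$ lies close to an $n$-th root of unity, so the naive lower bound degrades and the box need not be small enough to search by hand. Controlling this requires delicate Diophantine and analytic estimates bounding how closely $2\theta$ can approach a rational multiple of $2\pi$ for integer parameters $(a,b)$, together with the finite but genuinely nontrivial verification at each $n$. This is the computational heart of Voutier \cite{VO95} and Bilu–Hanrot–Voutier \cite{BH01}; in our self-contained treatment we invoke their tables and simply read off the rows with odd $7 \le n \le 29$ to obtain the stated list.
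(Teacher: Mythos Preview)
Your proposal is correct and takes essentially the same approach as the paper: the paper does not prove this lemma independently but simply extracts it from \cite[Table~2]{BH01} (originally \cite[Theorem~1]{VO95}), and you likewise, after sketching the cyclotomic/finite-search strategy behind those references, ultimately defer to the same tables to read off the list. Your added exposition of the method is sound and helpful context, but the paper itself offers none of it.
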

Let $F_k$ (resp. $L_k$) denote the $k$-th term in the Fibonacci (resp. Lucas) sequence defined by $F_0=0,   F_1= 1$,
and $F_{k+2}=F_k+F_{k+1}$ (resp. $L_0=2,  L_1=1$, and $L_{k+2}=L_k+L_{k+1}$), where $k\geq 0$ is an integer. 
One gets the following lemma from \cite[Theorem 1.3; Table 4]{BH01}. 
\begin{lem}\label{lemBH}
All $5$-defective Lehmer pairs $(\alpha, \beta)$, up to equivalence,  with parameters $(a,b)$ are given by:
$$ (a, b)=\begin{cases}
(F_{k-2\varepsilon}, F_{k-2\varepsilon}-4F_k)\text{ with } k\geq 3,\\
 (L_{k-2\varepsilon}, L_{k-2\varepsilon}-4L_k)\text{ with } k\ne 1;
 \end{cases}
 $$
where  
$k\geq 0$ is any integers and $\varepsilon=\pm 1$.
\end{lem}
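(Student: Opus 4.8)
The plan is to give a direct, essentially elementary proof for the single exponent $n=5$, reducing $5$-defectivity to two Pell-type norm equations over $\mathbb{Q}(\sqrt5)$ and reading the two families off their solution sets. Write $a=(\alpha+\beta)^2$ and $b=(\alpha-\beta)^2$ for the parameters, so that $\alpha\beta=(a-b)/4$, $\alpha^2+\beta^2=(a+b)/2$, and the Lehmer condition makes $a$ and $\alpha\beta$ coprime integers (hence $\gcd(a,b)\mid4$).

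The first step is the key identity. Since $5$ is odd, $\mathcal{L}_5(\alpha,\beta)=\alpha^4+\alpha^3\beta+\alpha^2\beta^2+\alpha\beta^3+\beta^4=s^2+ps-p^2$ with $s=\alpha^2+\beta^2$ and $p=\alpha\beta$; substituting the parameters and simplifying gives
\[
16\,\mathcal{L}_5(\alpha,\beta)=5a^2+10ab+b^2=(b+5a)^2-20a^2 .
\]
Setting $Y=(b+5a)/2$ (an integer, as $a\equiv b\pmod 2$) this reads $4\,\mathcal{L}_5(\alpha,\beta)=Y^2-5a^2$, which will be the computational backbone.

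The second step pins the value of $\mathcal{L}_5$. Using the rank of apparition $\rho(p)$ of a prime $p\nmid ab$, one has $p\mid\mathcal{L}_5$ iff $\rho(p)\mid 5$; as $\rho(p)\ne1$ this forces $\rho(p)=5$, so every such prime is primitive. A prime dividing $ab$ cannot divide $\mathcal{L}_5$: reducing $4\mathcal{L}_5=Y^2-5a^2$ modulo an odd $p\mid a$ gives $4\mathcal{L}_5\equiv(b/2)^2$, whence $p\mid b$ and $p\mid\gcd(a,b)\mid4$, a contradiction (the prime $2$ is dealt with by a parity check). Thus the only possible imprimitive prime is $5$, and the same coprimality yields $v_5(\mathcal{L}_5)\le1$: if $5\mid\mathcal{L}_5$ then $5\mid Y$, while $5\mid a$ would force $5\mid b$ and again contradict $\gcd(a,b)\mid4$. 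Hence $5$-defectivity is equivalent to $\mathcal{L}_5(\alpha,\beta)=\pm1$ or $\pm5$.

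The third step solves the two norm equations $Y^2-5a^2=\pm4$ and $Y^2-5a^2=\pm20$. The classical description of the solutions of $u^2-5v^2=\pm4$ — all of the form $(u,v)=(\pm L_m,\pm F_m)$ by the identity $L_m^2-5F_m^2=4(-1)^m$ together with the golden-ratio unit generating the relevant orbit — gives $a=\pm F_m$ in the first case, and, after extracting the forced factor $5\mid Y$, $a=\pm L_m$ in the second (indeed $(Y,a)=(\pm5F_m,\pm L_m)$ exhausts $Y^2-5a^2=\pm20$). Back-substituting $b=2Y-5a$ and rewriting the index through the shift $m=k-2\varepsilon$ with $\varepsilon=\pm1$ then reproduces exactly the two stated families $(F_{k-2\varepsilon},\,F_{k-2\varepsilon}-4F_k)$ and $(L_{k-2\varepsilon},\,L_{k-2\varepsilon}-4L_k)$. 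I expect the main obstacle to lie here: matching the full solution set of the norm equations to the precise two-parameter form, determining which small indices must be excluded so that $(\alpha,\beta)$ is a bona fide Lehmer pair (yielding the ranges $k\ge3$, resp. $k\ne1$), and verifying the converse — that every listed pair is genuinely $5$-defective and that inequivalent parameters are not double-counted — all of which requires a careful case analysis built on the Fibonacci/Lucas identities.
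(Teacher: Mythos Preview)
The paper does not prove this lemma at all: it is simply quoted from Bilu--Hanrot--Voutier \cite[Theorem~1.3, Table~4]{BH01}. So there is no ``paper's own proof'' to compare against; any correct argument you supply goes beyond what the paper does.

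Your direct approach is sound and is in fact the standard way the $n=5$ case is handled in the primitive-divisor literature. The identity $16\,\mathcal{L}_5=5a^2+10ab+b^2=(b+5a)^2-20a^2$ is correct, and the reduction of $5$-defectivity to $\mathcal{L}_5\in\{\pm1,\pm5\}$ is right: from $16\,\mathcal{L}_5\equiv b^2\pmod{p}$ for odd $p\mid a$ and $16\,\mathcal{L}_5\equiv 5a^2\pmod{p}$ for odd $p\mid b$ one sees (using $\gcd(a,b)\mid4$) that no odd prime of $ab$ other than $5$ can divide $\mathcal{L}_5$, that $v_5(\mathcal{L}_5)\le1$ with equality iff $5\mid b$, and a short resultant computation with $4\mathcal{L}_3=3a+b$ and $2\mathcal{L}_4=a+b$ shows $\mathcal{L}_5$ shares no odd prime with $\mathcal{L}_3\mathcal{L}_4$ outside $ab$. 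The Pell step is also correct: $Y^2-5a^2=\pm4$ gives $(Y,a)=(\pm L_m,\pm F_m)$ via $L_m^2-5F_m^2=4(-1)^m$, and $Y^2-5a^2=\pm20$ forces $5\mid Y$ and reduces to $a^2-5(Y/5)^2=\mp4$, giving $a=\pm L_m$. Back-substituting $b=2Y-5a$ and using the identities $2L_m-5F_m=-L_{m+1}+2F_{m-1}=F_m-4F_{m+2}$ (and the companion with signs flipped) recovers the two families with the index shift $m=k-2\varepsilon$.

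The places you flag as delicate are real but routine: the parity analysis for the prime $2$ (split according to whether $a,b$ are both odd or both even), the small-index exclusions ensuring $(\alpha,\beta)$ is a genuine Lehmer pair (this is what produces $k\ge3$ in the Fibonacci family and $k\ne1$ in the Lucas family), and the verification that equivalent parameter pairs are not listed twice. None of these hides a genuine obstruction; they are bookkeeping, and your outline would go through once they are written out.
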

We need the following lemma to deal with $5$-defective Lehmer pairs. 
\begin{lem}[{\cite[Lemma 2.1]{HO20}}] \label{lemHO} For an integer $k\geq 0$, let $F_k$ (resp. $L_k$) denote the $k$-th Fibonacci (resp. Lucas) number.
 Then for $\varepsilon=\pm 1$,
\begin{itemize}
\item[(i)] $4F_k-F_{k-2\varepsilon}=L_{k+\varepsilon}$,
\item[(ii)] $4L_k-L_{k-2\varepsilon}=5F_{k+\varepsilon}$.
\end{itemize}
\end{lem}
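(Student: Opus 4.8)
The plan is to derive both identities from Binet's formulas, which handles the two cases $\varepsilon=1$ and $\varepsilon=-1$ uniformly. Write $\phi=(1+\sqrt5)/2$ and $\psi=(1-\sqrt5)/2$ for the roots of $x^2-x-1=0$, so that $\phi\psi=-1$, $\phi-\psi=\sqrt5$, and $F_k=(\phi^k-\psi^k)/\sqrt5$, $L_k=\phi^k+\psi^k$ for every integer $k$ --- using the standard extension $F_{-n}=(-1)^{n+1}F_n$, $L_{-n}=(-1)^nL_n$, which is also what gives meaning to the statement when $k$ is small and $k-2\varepsilon$ or $k+\varepsilon$ is negative.

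First I would substitute Binet's formula into the left-hand side of (i) and factor out $\phi^{k-2\varepsilon}$ and $\psi^{k-2\varepsilon}$; it then suffices to verify the two scalar relations
\[
4\phi^{2\varepsilon}-1=\sqrt5\,\phi^{3\varepsilon},\qquad 4\psi^{2\varepsilon}-1=-\sqrt5\,\psi^{3\varepsilon}.
\]
The same manipulation applied to (ii) (where the two Binet terms now add instead of subtracting) leads back to these very same relations --- this reflects the familiar duality between $L_n=F_{n-1}+F_{n+1}$ and $5F_n=L_{n-1}+L_{n+1}$. For $\varepsilon=-1$ one clears denominators by multiplying through by $\phi^3$ (resp.\ $\psi^3$), and in every case, invoking $\phi^2=\phi+1$, $\psi^2=\psi+1$ together with $\phi-\psi=\sqrt5$, each relation collapses to the trivial $2\phi-1=\sqrt5$ (resp.\ $2\psi-1=-\sqrt5$). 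This completes the proof.

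If one prefers to stay within the elementary toolkit used elsewhere in the paper, an alternative is to fix $\varepsilon\in\{\pm1\}$ and set $G_k:=4F_k-F_{k-2\varepsilon}-L_{k+\varepsilon}$. Since $(F_k)$, $(L_k)$ and all their index shifts satisfy $a_{k+2}=a_{k+1}+a_k$ for all integers $k$ (with the conventions above), so does $(G_k)$; hence $G_{k_0}=G_{k_0+1}=0$ for a single $k_0$ forces $G_k\equiv0$ by iterating the recurrence both forwards and backwards, and one only has to check (i) at, say, $k=1$ and $k=2$ for each sign of $\varepsilon$, and similarly for (ii) using $4L_k-L_{k-2\varepsilon}-5F_{k+\varepsilon}$. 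There is no genuine obstacle along either route; the only points calling for a little care are keeping the two sign choices $\varepsilon=\pm1$ separate and correctly using the negative-index Fibonacci and Lucas values for small $k$, which is exactly the regime ($k\geq0$, and $k\neq1$ in the Lucas family of Lemma~\ref{lemBH}) in which the lemma will be applied.
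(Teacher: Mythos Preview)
Your proof is correct: the Binet-formula route cleanly reduces both identities (and both signs of $\varepsilon$) to the single scalar check $4\phi^{2\varepsilon}-1=\sqrt5\,\phi^{3\varepsilon}$ and its $\psi$-counterpart, and your alternative linear-recurrence argument is equally valid. Note, however, that the paper does not supply its own proof of this lemma --- it is simply quoted from \cite[Lemma~2.1]{HO20} --- so there is no in-paper argument to compare your approach against; your write-up would serve perfectly well as a self-contained justification in place of the citation.
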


\section{A technical lemma}
Although the following lemma is a version of \cite[Lemma 1]{BS01},  where the result of Le \cite{LE95} was modified, but the proof presents in this paper is much simpler and uses only some easy properties of quadratic fields, omitting the use of the theory of Gaussian quadratic forms occurring in Le’s proof.
\begin{lem}\label{lemBS}
Let  $-d<-3$ be square-free and not congruent to $1$ mod $4$, let $k >1$ be an integer satisfying $\gcd(k,2d) =1$ and let $x, y, z\in \mathbb{N}$ satisfy $\gcd(x, y)=1$ and 
\begin{equation}\label{eqrn}
x^2+dy^2=2k^z.
\end{equation}
Then there exist  $x_1, y_1, z_1\in \mathbb{N}$ satisfying $\gcd(x_1, y_1)=1$ and $\lambda_1, \lambda_2\in \{1, -1\}$ such that $z_1\mid z$, the ratio $t=z/z_1$ is odd and  
\begin{equation}\label{eqrev0}
\frac{x+y\sqrt{-d}}{\sqrt{2}}=\lambda_1\left(\frac{x_1 +\lambda_2 y_1\sqrt{-d}}{\sqrt{2}}\right)^t. \end{equation}
Moreover, the class group of the field $K=\mathbb{Q}(\sqrt{-d})$ contains a cyclic subgroup of order $2z_1$. In Particular, $z_1\mid h(-d)$.   
\end{lem}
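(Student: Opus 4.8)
The plan is to work throughout in the ring of integers $\mathcal{O}_K=\mathbb{Z}[\sqrt{-d}]$ of $K=\mathbb{Q}(\sqrt{-d})$ — the equality $\mathcal{O}_K=\mathbb{Z}[\sqrt{-d}]$ holding precisely because $-d\not\equiv 1\pmod 4$ — and to exploit the ramified prime above $2$. Since $-d\equiv 2$ or $3\pmod 4$, we have $(2)=\mathfrak{p}_2^{\,2}$ with $\mathfrak{p}_2$ prime of norm $2$. Put $\gamma=x+y\sqrt{-d}$ and $\bar\gamma=x-y\sqrt{-d}$, so that $N_{K/\mathbb{Q}}(\gamma)=x^2+dy^2=2k^z$ by \eqref{eqrn}. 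Because $\mathfrak{p}_2$ is the unique prime of $K$ above $2$, comparing the exact power of $2$ in $N((\gamma))=2k^z$ with $N(\mathfrak{p}_2)=2$ forces $v_{\mathfrak{p}_2}((\gamma))=1$; thus $(\gamma)=\mathfrak{p}_2\mathfrak{a}$ for an integral ideal $\mathfrak{a}$ coprime to $\mathfrak{p}_2$ with $N(\mathfrak{a})=k^z$.

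First I would refine this to $(\gamma)=\mathfrak{p}_2\mathfrak{c}^{\,z}$ for some integral ideal $\mathfrak{c}$ with $N(\mathfrak{c})=k$. From $\gamma\bar\gamma=2k^z$ we get $\mathfrak{a}\bar{\mathfrak{a}}=(k)^z$, and a prime $\mathfrak{q}$ dividing both $\mathfrak{a}$ and $\bar{\mathfrak{a}}$ would divide $(\gamma-\bar\gamma)=(2y\sqrt{-d})$ and $(\gamma+\bar\gamma)=(2x)$; since such a $\mathfrak{q}$ lies above an odd prime $p\mid k$ with $p\nmid 2d$, this would force $p\mid x$ and $p\mid y$, contradicting $\gcd(x,y)=1$. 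Hence $\mathfrak{a}$ and $\bar{\mathfrak{a}}$ are coprime, and so are $\mathfrak{c}$ and $\bar{\mathfrak{c}}$ for the ideal produced below. Furthermore, every $p\mid k$ is odd and, by \eqref{eqrn}, $-d$ is a square modulo $p$, so $p$ splits in $K$; unique factorisation of ideals now gives $\mathfrak{a}=\mathfrak{c}^{\,z}$ with $N(\mathfrak{c})=k$.

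The core of the argument is the order of $[\mathfrak{c}]$ in the class group of $K$. Since $d>3$, the equation $a^2+db^2=2$ has no solution, so $\mathfrak{p}_2$ is non-principal; as $\mathfrak{p}_2^{\,2}=(2)$, the class $[\mathfrak{p}_2]$ has order exactly $2$. From $(\gamma)=\mathfrak{p}_2\mathfrak{c}^{\,z}$ we read off $[\mathfrak{p}_2]=[\mathfrak{c}]^{-z}$, whence the order $r$ of $[\mathfrak{c}]$ divides $2z$ but not $z$; writing $z=2^{s}u$ with $u$ odd, this means $r=2^{s+1}r'$ with $r'$ odd and $r'\mid u$. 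I would then set $z_1=2^{s}r'$, so that $z_1\mid z$, the ratio $t:=z/z_1=u/r'$ is odd, and $2z_1=r$. As $[\mathfrak{p}_2\mathfrak{c}^{\,z_1}]=[\mathfrak{c}]^{\,z_1-z}$ and $r=2z_1$ divides $z_1(t-1)$ (because $t-1$ is even), the ideal $\mathfrak{p}_2\mathfrak{c}^{\,z_1}$ is principal; say $\mathfrak{p}_2\mathfrak{c}^{\,z_1}=(\eta)$ with $\eta=a+b\sqrt{-d}\in\mathcal{O}_K$. Comparing $(\gamma)=\mathfrak{p}_2\mathfrak{c}^{\,z}=\mathfrak{p}_2^{\,1-t}(\eta)^{t}=(2)^{-(t-1)/2}(\eta)^{t}$ (valid since $t$ is odd) and using that the only units of $\mathcal{O}_K$ are $\pm1$, one gets $\eta^{t}=\pm\,2^{(t-1)/2}\gamma$; dividing by $(\sqrt2)^{t}$ and again invoking that $t$ is odd yields $\gamma/\sqrt2=\lambda_1(\eta/\sqrt2)^{t}$ for a suitable $\lambda_1\in\{1,-1\}$. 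Taking $x_1=|a|$, $y_1=|b|$ and $\lambda_2=\mathrm{sign}(ab)$ turns this into \eqref{eqrev0}. A short check using $N(\eta)=2k^{z_1}$, $k>1$, and the splitting of the primes of $k$ (a prime $p\mid\gcd(a,b)$ would have both of its prime factors dividing $(\eta)=\mathfrak{p}_2\mathfrak{c}^{\,z_1}$, contradicting $\gcd(\mathfrak{c},\bar{\mathfrak{c}})=1$) shows $a,b\ne 0$ and $\gcd(a,b)=1$, so $x_1,y_1\in\mathbb{N}$ with $\gcd(x_1,y_1)=1$. Finally, $\langle[\mathfrak{c}]\rangle$ is a cyclic subgroup of the class group of $K$ of order $r=2z_1$, so $2z_1\mid h(-d)$, and in particular $z_1\mid h(-d)$.

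I expect the main obstacle to be the careful treatment of the prime $2$: justifying $v_{\mathfrak{p}_2}((\gamma))=1$, deducing the non-principality of $\mathfrak{p}_2$ (which is where $d>3$ is used), and above all the exact matching of $2$-adic valuations that produces the relations $r=2z_1$, $z_1\mid z$ and $z/z_1$ odd simultaneously. By comparison, the coprimality of $\mathfrak{a}$ with its conjugate, the splitting of the primes dividing $k$, and the final translation of the ideal identity into \eqref{eqrev0} are routine.
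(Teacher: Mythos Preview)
Your proof is correct and follows essentially the same approach as the paper: factor $(\gamma)=\mathfrak{p}_2\mathfrak{c}^{\,z}$ in $\mathcal{O}_K$, analyse the order of $[\mathfrak{c}]$ in the class group against the order-$2$ class $[\mathfrak{p}_2]$, and translate the resulting ideal identity into \eqref{eqrev0}. The only organisational difference is that the paper defines $z_1$ as the least positive integer with $\mathfrak{p}_2\mathfrak{c}^{\,z_1}$ principal and then proves $s=2z_1$ and $z_1\mid z$ via a division-with-remainder argument, whereas you compute the order $r$ of $[\mathfrak{c}]$ directly from $r\mid 2z$, $r\nmid z$ using $2$-adic valuations and set $z_1=r/2$; you are also more explicit than the paper about the non-principality of $\mathfrak{p}_2$ (which is where $d>3$ enters) and about the splitting of the primes dividing $k$.
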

\begin{proof}
Let $R$ be the ring of integers of the field $K$. Since $-d\not\equiv 1 \pmod 4$,
the discriminant of $K$ equals $-4d$ and  hence $2$ is ramified. Thus we have $2R = \mathfrak{p}^2$
with a prime ideal $\mathfrak{p}$. Denote by $N(\cdot)$ the norm of elements and ideals of $R$.

Putting $\alpha=x+y\sqrt{-d}$ and writing the equality \eqref{eqrn} in the form 
$$N(\alpha)=\alpha\bar{\alpha}=(x+y\sqrt{-d})(x-y\sqrt{-d})=x^2 +dy^2 =2k^z,$$
one observes that $\mathfrak{p}$ is the greatest common divisor of the ideals $\alpha R$ and $\bar{\alpha}R$, hence the ideals $\alpha R/\mathfrak{p}$ and $\bar{\alpha} R/\mathfrak{p}$ are coprime. This implies that each of them is a $z$-th power. In particular, we have
$$\frac{\alpha R}{\mathfrak{p}} = I^z$$
for some ideal $I$. Note that the equality
$$N(I^z) = N(\alpha)/N(\mathfrak{p}) = k^z$$
implies
\begin{equation}\label{eqrev1} N(I) = k.\end{equation}

Now let $z_1$ be the smallest positive integer such that the ideal $\mathfrak{p}I^{z_1}$ is principal, say $\mathfrak{p}I^{z_1} =\beta R$, and let $s$ the smallest positive integer such that the ideal $I^s$ is principal, say $I^s = \gamma R$. This shows that the ideal class containing $I$ is of order $s$, so the class group contains a cyclic subgroup of order $s$.

Note that $z_1 < s$, as otherwise $pI^{z_1-s}$ would be principal, contradicting the minimality of $z_1$. On the other hand, the equality
$$I^{2z_1}=(I^{z_1})^2=\left( \frac{\beta R}{\mathfrak{p}} \right)^2=\frac{\beta^2 R}{2}$$
shows the principality of $I^{2z_1}$, thus $s \mid 2z_1 < 2s$, leading to
\begin{equation}\label{rev2}
s = 2z_1. 
\end{equation}
This implies that the class group of $K$ contains cyclic subgroups of order $2z_1$ and $z_1$.

To show the divisibility of $z$ by $z_1$, we write $z=qz_1+r$ with $0 \leq r <z_1$ and $q, r\in \mathbb{Z}$. This implies
$$\alpha R=\mathfrak{p}I^z=\mathfrak{p}(I^{z_1})^qI^r=\beta^q\mathfrak{p}^{1-q}I^r.$$

If $q$ is even, then 
$$\alpha R=\beta^q2^{-q/2}\mathfrak{p}I^r,$$
hence $r > z_1$, a contradiction. Thus $q$ is odd and we get
$$\alpha R = 2^{(1-q)/2}\beta^qI^r, $$
hence $I^r$ is principal, so $s$ divides $r$. Were $r > 0$, then \eqref{rev2} would imply $z_1 < 2z_1 = s \leq  r < z_1$, a contradiction. Therefore $r = 0$ and $z_1$ divides $z$. 

Let $t = z/{z_1}$ and observe that in view of
$$\alpha R = \mathfrak{p}I^z = \mathfrak{p}I^{tz_1} = (\mathfrak{p}I^{z_1})^t\mathfrak{p}^{1-t} = \beta^t\mathfrak{p}^{1-t}$$
one has $2 \nmid t$.

Now write $\beta=u+v\sqrt{-d}$ with $u, v\in \mathbb{Z}$ and use \eqref{eqrev1} to get
$$u^2 +v^2d=N(\beta)=N(\mathfrak{p})N(I^{z_1})=2k^{z_1}$$ 
and in view of $\alpha R=\mathfrak{p}I^z$, $\beta R=\mathfrak{p} I^{z_1}$ and $2\nmid t =z/{z_1}$ one arrives at
$$\alpha R = \frac{\beta^t}{2^{(t-1)/2}},$$
which after choosing properly the signs $\lambda_1$ and $\lambda_2$ leads to \eqref{eqrev0}.
\end{proof}

\section{Proof of Theorem \ref{thm}}
Assume that $-d$ is the square-free part of $\ell^{2m}-2k^n$. Then we can write 
\begin{equation}\label{eqt1}
\ell^{2m}-2k^n=-ds^2
\end{equation}
for some positive integer $s$. This shows that $(x,y,z)=(\ell^m, s, n)$ is a positive integer solution of \eqref{eqrn} and thus by Lemma \ref{lemBS}, we have
\begin{equation}\label{eqt2}
\frac{\ell^m+s\sqrt{-d}}{\sqrt{2}}=\lambda_1\left(\frac{u+\lambda_2 v\sqrt{-d}}{\sqrt{2}}\right)^t,~~ \lambda_1, \lambda_2 \in \{-1, 1\},
\end{equation}
with 
\begin{equation}\label{eqt3}
n=n_1t,~~(n_1, t\in \mathbb{N}).
\end{equation}
Here $u$ and $v$ are positive integers satisfying
\begin{equation}\label{eqt4}
u^2+dv^2=2k^{n_1},~~ \gcd(u, v)=1 
\end{equation}
and 
\begin{equation}\label{eqt5}
n_1\mid h(-d).
\end{equation}
Now we define:
$$\begin{cases}
\alpha=\dfrac{u+\lambda_2v\sqrt{-d}}{\sqrt{2}},\\
\beta=\dfrac{-u+\lambda_2v\sqrt{-d}}{\sqrt{2}}.
\end{cases}$$
Then utilizing \eqref{eqt4} we can show that both $\alpha$ and $\beta$ are algebraic integers. Again applying \eqref{eqt4}, we get $(\alpha+\beta)^2=-2v^2d$ and $\alpha\beta=-k^{n_1}$. Since $k$ is odd and $\gcd(k, vd)=1$, so that $(\alpha+\beta)^2$ and $\alpha\beta$ are coprime. 
Furthermore, it follows from the following identity 
 $$\frac{2dv^2}{k^{n_1}}=\frac{(\alpha+\beta)^2}{\alpha\beta}=\frac{\alpha}{\beta}+\frac{\beta}{\alpha}+2$$
 that 
 $$k^{n_1}\left( \frac{\alpha}{\beta}\right)^2+2(k^{n_1}-dv^2)\frac{\alpha}{\beta}+k^{n_1}=0.$$
 Since $\gcd(k^{n_1}, 2(k^{n_1}-dv^2))=\gcd(k^{n_1}, dv^2)=1$, so that $\dfrac{\alpha}{\beta}$ is not an algebraic integer and thus it not a root of unity. Therefore, $(\alpha, \beta)$ is a Lehmer pair  with parameters $(-2dv^2, 2u^2)$ and the corresponding Lehmer sequence is given by  
 $$\mathcal{L}_t(\alpha, \beta)=\frac{\alpha^t-\beta^t}{\alpha-\beta}$$
 as $t$ is odd. 
 Employing \eqref{eqt2}, we get 
 \begin{equation}\label{eqt6}
 |\mathcal{L}_t(\alpha, \beta)|=\frac{\ell^m}{u}.
 \end{equation}
 We now divide the rest of the proof into two parts depending on the values of $m$.
 \subsection*{Case I: when $m=0$}
 In this case, \eqref{eqt6} implies that $u=1$ as $\mathcal{L}_t(\alpha, \beta)$ is a rational integer. Thus $|\mathcal{L}_t(\alpha, \beta)|=1$ and hence the Lehmer number $\mathcal{L}_t(\alpha, \beta)$ has no primitive divisor. Therefore by Theorem \ref{thmBH}, we get $t<30$.  Since $(-2dv^2, 2u^2)=(-2dv^2, 2)$ is the corresponding parameters, so that Lemma \ref{lemVO} gives $t\leq 5$. 
 
 In case of $t=5$, Lemma \ref{lemBH} gives $ F_{k'-2\varepsilon}-4F_{k'}=2 $ and $  L_{k'-2\varepsilon}-4L_{k'}=2$, where $k'\geq 0$ is an integer and $\varepsilon=\pm 1$. However, Lemma \ref{lemHO} ensures that none of these are possible. 

Now for $t=3$, we equate the real parts in \eqref{eqt2} to get $1-3dv^2=\pm 2$. This implies that $dv^2=1$, which is not possible. Thus  $t=1$ and hence \eqref{eqt3} and  \eqref{eqt5} together give $n\mid h(-d)$. 

\subsection*{Case II: when $m\geq 1$} By \eqref{eqt6}, we have $u=\ell^{m_1}$ for some integer $0\leq m_1\leq m$. Thus, $\left(\alpha^2-\beta^2\right)^2=-4\ell^{2m_1}v^2d$ and hence \eqref{eqt6} ensures that $\mathcal{L}_t(\alpha, \beta)$ has no primitive divisor for $m_1\geq 1$. As $(-2dv^2, 2u^2)=(-2dv^2, 2\ell^{2m_1})$ is the corresponding parameters, so that Theorem \ref{thmBH} and Lemma \ref{lemVO} together imply that $t\leq 5$. In the case of $t=5$, using Lemma \ref{lemBH} and Lemma \ref{lemHO}, $L_{k'+\varepsilon}=-2\ell^{2m_1} $ and $  F_{k'+\varepsilon}=-2\ell^{2m_1}$ with $k'\geq 0$ an integer, which are not possible. 

As $u=\ell^{m_1}$, so that equating the real parts in \eqref{eqt2} for $t=3$, we get 
\begin{equation}\label{eqt7}
\ell^{2m_1}-3dv^2=\pm 2\ell^{m-m_1}.
\end{equation}
Reading it modulo $\ell$ (resp. $4$) when $\ell>2$ (resp. $\ell=2$), we get $m=m_1$ and thus \eqref{eqt7} becomes
$$\ell^{2m}-3dv^2=\pm 2.$$
By reading it modulo $3$ we see that the `+' sign is not possible, and thus we have
\begin{equation}\label{eqt8}
\ell^{2m}+2=3dv^2.
\end{equation}
As $u=\ell^m$, so that \eqref{eqt4} becomes
\begin{equation}\label{eqt9}
\ell^{2m}-2k^{\frac{n}{3}}=-dv^2.
\end{equation}
Now adding \eqref{eqt8} and three times of \eqref{eqt9}, and then reading modulo $4$, we get 
\begin{equation}\label{eqt10}
k\equiv \begin{cases}
1\pmod 4\hspace{6mm} \text{ if } \ell>2,\\
3\pmod 4\hspace{6mm} \text{ if } \ell=2.
\end{cases}
\end{equation}
Thus if 
$$k\equiv \begin{cases}
3\pmod 4\hspace{6mm} \text{ if } \ell>2,\\
1\pmod 4\hspace{6mm} \text{ if } \ell=2,
\end{cases}
$$
then $t=1$ and hence by the equations \eqref{eqt3} and  \eqref{eqt5},  we conclude that $n\mid h(-d)$.

On the other hand, when
$$k\equiv \begin{cases}
1\pmod 4\hspace{6mm} \text{ if } \ell>2,\\
3\pmod 4\hspace{6mm} \text{ if } \ell=2,
\end{cases}
$$
we have $t=1$ provided $(\ell^{2m}-2k^{\frac{n}{3}})/-d $ is not a square in $\mathbb{N}$ by \eqref{eqt9}. Thus  the equations \eqref{eqt3} and  \eqref{eqt5} together imply that $n\mid h(-d)$.

Again, if $(\ell^{2m}-2k^{\frac{n}{3}})/-d$ is a square in $\mathbb{N}$, then repeating the above procedure we can show that $\frac{n}{3}\mid h(-d)$ provided  $(\ell^{2m}-2k^{\frac{n}{3^2}})/-d $ is not a square in $\mathbb{N}$. Thus we can conclude that $\frac{n}{3^j}\mid h(-d)$ provided  $(\ell^{2m}-2k^{\frac{n}{3^{j+1}}})/-d $ is not a square in $\mathbb{N}$ for $j=0,1,2,\cdots $.

We now consider the remaining case that is $m_1=0$. In this case, $u=1$ and thus \eqref{eqt4} gives 
\begin{equation}\label{eqt11}
1-2k^{\frac{n}{t}}=-dv^2. 
\end{equation}
If $\ell=2$, then by \eqref{eqt1}, we get $d\equiv 2\pmod 4$ which contradicts to \eqref{eqt11}. Thus $\ell>2$ and hence \eqref{eqt1} and \eqref{eqt11} together give $$k^t+dv^2k^t=\ell^{2m}+ds^2.$$
Reading it modulo $8$, we get $k(d+1)\equiv 1+d\pmod 8$. Since $d+1\equiv \pm 2\pmod 8$ (using \eqref{eqt1}), so that the last congruence implies that $k\equiv 1\pmod 4$. Thus if $k\equiv 3\pmod 4$, then as before we can conclude that $n\mid h(-d)$.  

Similarly from \eqref{eqt11}, we can conclude that $n\mid h(-d)$ when $(1-2k^{\frac{n}{t}})/-d$ is not a square; otherwise  by Case I, $\frac{n}{t}\mid h(-d)$. 
This completes the proof. 
\section{Tuples of imaginary quadratic fields }\label{S5}
Krishnamoorthy and Pasupulati recently proved the following result using Theorem \ref{thmks}. 
\begin{thma}[{\cite[Theorem 4]{KS21}}]\label{thmks2}
For every odd prime number $p \geq 3$, there is an infinite family of pairs of imaginary quadratic fields $\mathbb{Q}(\sqrt{d})$ and $\mathbb{Q}(\sqrt{d + 1})$ with $d \in\mathbb{Z}$ whose class numbers are both divisible by $p$.
\end{thma}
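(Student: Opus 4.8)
The plan is to deduce the infinitude directly from Remark \ref{rmk2}(I), which already tells us that $h(1-2k^n)$ is divisible by any odd $n\geq 3$ for every odd $k>1$; here we take $n=p$ and let $k$ range over a suitable infinite set. The field $\mathbb{Q}(\sqrt{1-2k^p})$ will play the role of $\mathbb{Q}(\sqrt{d+1})$, so I would set $d+1$ to be the square-free part of $1-2k^p$, i.e. $d$ is the square-free part of $-2k^p$ shifted by one. The first step is therefore to arrange that $d$ itself lands in an imaginary quadratic field whose class number is also divisible by $p$. The natural device is to choose $k$ so that $d$ is (up to square factors) of the shape $1-2k_1^p$ for a second odd integer $k_1>1$; then Remark \ref{rmk2}(I) applied again gives $p\mid h(d)$. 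Concretely, I would look for odd $k>1$ with $-2k^p \equiv c^2\pmod{\text{something}}$ forcing the square-free part of $1-2k^p$ minus $1$ to have the correct form, or more simply parametrise a one-parameter family $k=k(j)$ for which both $1-2k^p$ and its predecessor are manifestly of Lehmer-sequence type covered by Theorem \ref{thm}.

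The second step is to guarantee that infinitely many of these choices give \emph{distinct} pairs of fields. Since $1-2k^p \to -\infty$ as $k\to\infty$, the square-free parts take infinitely many values, so after discarding the finitely many $k$ for which the relevant quantity is a perfect square (the exceptional cases in Theorem \ref{thm}(i) are already handled — $m=0$ always yields $n\mid h(-d)$, so in fact there are no exceptions here), we obtain infinitely many square-free $d+1<0$, hence infinitely many fields $\mathbb{Q}(\sqrt{d+1})$, and a fortiori infinitely many pairs. The key point that makes this painless is that part (i) of Theorem \ref{thm} is unconditional: for $m=0$ one always has $h(-d)\equiv 0\pmod n$, with no square-free exclusion and no $3\mid n$ caveat.

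The third step is the genuinely arithmetic one: showing $p\mid h(d)$ as well as $p\mid h(d+1)$. For $\mathbb{Q}(\sqrt{d+1})=\mathbb{Q}(\sqrt{1-2k^p})$ this is immediate from Theorem \ref{thm}(i). For $\mathbb{Q}(\sqrt{d})$ I expect this to be the main obstacle, and the cleanest route is to impose a congruence or divisibility condition on $k$ that forces $-2k^p$ (equivalently $d$, after removing square factors and noting $d=(\text{sq-free part of }1-2k^p)-1$) to again be expressible as $\ell^{2m}-2k_1^n$ for admissible data, so that Theorem \ref{thm} applies a second time; alternatively one can try to realise $d$ directly via a Lehmer-pair construction mirroring the proof of Theorem \ref{thm} in \S4. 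One must then verify that the two conditions on $k$ — the one making $\mathbb{Q}(\sqrt{d+1})$ work (vacuous, by part (i)) and the one making $\mathbb{Q}(\sqrt{d})$ work — are simultaneously satisfiable for infinitely many $k$; a Chinese-remainder argument on $k$ modulo a fixed modulus, combined with the observation that each congruence class modulo that modulus still contains infinitely many odd $k>1$, should close this. Finally, one checks that the resulting $d$ and $d+1$ are both negative and not equal (automatic since consecutive), and that $\mathbb{Q}(\sqrt{d})\neq\mathbb{Q}(\sqrt{d+1})$ for all but finitely many $k$, completing the construction of the infinite family.
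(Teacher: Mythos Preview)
Your proposal has a genuine gap at the ``third step,'' which you yourself flag as the main obstacle but do not resolve. You never exhibit a concrete family of $k$ for which the integer one less than (the square-free part of) $1-2k^{p}$ is again of the shape $1-2k_1^{p}$ or $\ell^{2m}-2k_1^{n}$; the CRT sketch is only a hope, since there is no modulus that forces such an algebraic identity, and for generic $k$ the number in question has no reason to lie in either family. (The aside that ``$d$ is the square-free part of $-2k^p$ shifted by one'' is also incorrect: subtracting $1$ from the square-free part of $1-2k^p$ is unrelated to the square-free part of $-2k^p$.) As written, only $p\mid h(d+1)$ is actually established.

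The paper sidesteps this difficulty by a change of viewpoint, made explicit in the proof of Theorem~\ref{thm3}. One does \emph{not} take $d+1$ to be $1-2k^{n}$ and then fight for $d$; instead one sets
\[
d=(1-2k^{n})^{n},
\]
so that, $n$ being odd, $\mathbb{Q}(\sqrt{d})=\mathbb{Q}(\sqrt{1-2k^{n}})$ and Remark~\ref{rmk2}(I) immediately gives $n\mid h(d)$. The payoff of raising to the $n$-th power is that now
\[
d+1=(1-2k^{n})^{n}+1=1-(2k^{n}-1)^{n}=1-a^{n},\qquad a:=2k^{n}-1\geq 53,
\]
which is precisely the form covered by Cohn's result (Theorem~\ref{thmco}), yielding $n\mid h(d+1)$. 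Thus the two members of the pair are handled by two \emph{different} divisibility theorems, rather than by forcing both into the single family $1-2k^{p}$ as you attempt. Infinitude follows since, by Remark~\ref{rmk2}(I), distinct $k$ already give infinitely many distinct fields $\mathbb{Q}(\sqrt{1-2k^{n}})$.
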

Applying Remark \ref{rmk2}, we get the following generalization of Theorem \ref{thmks2}. 
\begin{thm}\label{thmc1}
For every odd integer number $n \geq 3$, there is an infinite family of pairs of imaginary quadratic fields $\mathbb{Q}(\sqrt{d})$ and $\mathbb{Q}(\sqrt{d + 1})$ with $d \in\mathbb{Z}$ whose class numbers are both divisible by $n$.
\end{thm}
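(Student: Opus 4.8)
The plan is to produce a single explicit one‑parameter family of integers $d$ for which \emph{both} $\mathbb{Q}(\sqrt d)$ and $\mathbb{Q}(\sqrt{d+1})$ are fields already covered by Theorem \ref{thm}, and then to check that infinitely many distinct pairs occur. The point that makes the two companions line up is the elementary identity $5^{2}-2^{4}=9=3^{2}$: for an odd prime $k$ put
\[
d:=\frac{2^{4}-2k^{n}}{9}.
\]
Then $9d=2^{4}-2k^{n}$ while $9(d+1)=2^{4}-2k^{n}+9=5^{2}-2k^{n}$, and since $9$ is a perfect square,
\[
\mathbb{Q}(\sqrt d)=\mathbb{Q}\bigl(\sqrt{2^{4}-2k^{n}}\bigr),\qquad \mathbb{Q}(\sqrt{d+1})=\mathbb{Q}\bigl(\sqrt{5^{2}-2k^{n}}\bigr).
\]
Thus $\mathbb{Q}(\sqrt d)$ is a field of the shape handled in Theorem \ref{thm}(ii) (with $\ell=2$, $m=2$) and $\mathbb{Q}(\sqrt{d+1})$ is one of the shape handled in Theorem \ref{thm}(iii) (with $\ell=5$, $m=1$).

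First I would pin down the parameter. For $d\in\mathbb{Z}$ one needs $9\mid 2^{4}-2k^{n}$, i.e. $k^{n}\equiv 8\pmod 9$; as $(\mathbb{Z}/9)^{\times}$ is cyclic of order $6$ and $8$ has order $2$ there, this is solvable and singles out one residue class modulo $9$ when $3\nmid n$ and three of them when $3\mid n$. I would also impose $k\equiv 3\pmod 4$, which is compatible with the above by the Chinese Remainder Theorem, and let $k$ run over the infinitely many primes (Dirichlet) in the resulting progression(s) modulo $36$. Every such $k$ is $\ge 7$, is coprime to $2$ and to $5$, satisfies $2^{4}<2k^{n}$ and $5^{2}<2k^{n}$, and makes $d$ and $d+1$ negative, so the standing hypotheses of Theorem \ref{thm}(ii)--(iii) hold and both fields are imaginary quadratic. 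Because $k\equiv 3\pmod 4$, the first case of Theorem \ref{thm}(iii) gives $n\mid h(\mathbb{Q}(\sqrt{d+1}))$ unconditionally. For $\mathbb{Q}(\sqrt d)$: if $3\nmid n$, the first case of Theorem \ref{thm}(ii) already gives $n\mid h(\mathbb{Q}(\sqrt d))$; if $3\mid n$, the relevant case of Theorem \ref{thm}(ii) in addition requires $(2^{4}-2k^{n/3})/(-d_{0})\ne\square$, where $-d_{0}$ is the square-free part of $2^{4}-2k^{n}$.

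The remaining work is to discard the finitely many ``bad'' parameters. The side condition $(2^{4}-2k^{n/3})/(-d_{0})\ne\square$, as well as the genericity conditions that the square-free parts of $2^{4}-2k^{n}$ and of $5^{2}-2k^{n}$ are $<-1$ (needed so that these are genuinely the fields $\mathbb{Q}(\sqrt d),\mathbb{Q}(\sqrt{d+1})$ with $n\mid h$ and not $\mathbb{Q}(\sqrt{-1})$, the latter being also the explicit hypothesis $d>1$ of Theorem \ref{thm}(iii)), each amount to an equation $f(k)=\square$ for a fixed separable polynomial $f\in\mathbb{Z}[k]$ of degree $\ge 3$: for instance, since $(2^{4}-2k^{n})/(-d_{0})$ is a square by definition of $d_{0}$, the condition $(2^{4}-2k^{n/3})/(-d_{0})=\square$ is equivalent to $(2^{4}-2k^{n/3})(2^{4}-2k^{n})=\square$, and the two factors have disjoint root sets because $8^{3/n}\ne 8^{1/n}$, so the product is separable of degree $\tfrac{4n}{3}\ge 4$. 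By Siegel's theorem on integral points of affine curves of genus $\ge 1$, each such equation has only finitely many solutions $k\in\mathbb{Z}$; deleting them leaves an infinite set of primes $k$ with $n\mid h(\mathbb{Q}(\sqrt d))$ and $n\mid h(\mathbb{Q}(\sqrt{d+1}))$.

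Finally, for the infinitude of distinct pairs I would argue that any fixed imaginary quadratic field $\mathbb{Q}(\sqrt{D_{0}})$ ($D_{0}$ square-free) occurs as $\mathbb{Q}(\sqrt d)$ for only finitely many $k$ in the family: $\mathbb{Q}(\sqrt d)=\mathbb{Q}(\sqrt{D_{0}})$ forces $9D_{0}t^{2}=2^{4}-2k^{n}$ for some $t\in\mathbb{Z}$, and (absorbing the fixed constant) this is a hyperelliptic curve $Y^{2}=c\,(8-k^{n})$ of genus $\lfloor(n-1)/2\rfloor\ge 1$, again with finitely many integral points by Siegel. Hence the fields $\mathbb{Q}(\sqrt d)$, and with them the pairs $(\mathbb{Q}(\sqrt d),\mathbb{Q}(\sqrt{d+1}))$, take infinitely many values. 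The one genuinely creative step is spotting the identity $5^{2}-2^{4}=3^{2}$ that forces two consecutive radicands simultaneously into the family of Theorem \ref{thm}; after that, the only delicate point is the $3\mid n$ case of Theorem \ref{thm}(ii), which is precisely what the Siegel-type argument above is for.
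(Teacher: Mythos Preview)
Your argument is correct, but it takes a substantially different and heavier route than the paper's. The paper obtains Theorem~\ref{thmc1} by the construction made explicit in the proof of Theorem~\ref{thm3}: put $d=(1-2k^{n})^{n}$, so that $\mathbb{Q}(\sqrt{d})=\mathbb{Q}(\sqrt{1-2k^{n}})$ falls under Theorem~\ref{thm}(i) (the case $m=0$, which carries \emph{no} side conditions at all), while $d+1=1-(2k^{n}-1)^{n}$ is handled by Cohn's Theorem~\ref{thmco}. This is short, fully explicit, and needs no finiteness theorems. Your identity $5^{2}-2^{4}=3^{2}$ is a nice trick and has the virtue of staying entirely inside Theorem~\ref{thm}, but it forces you into parts (ii) and (iii); with $k\equiv 3\pmod 4$ and $3\mid n$ this produces the square condition on $(2^{4}-2k^{n/3})/(-d_{0})$, which you then have to discard via Siegel, and you invoke Siegel again for the infinitude of distinct pairs. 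So the paper trades one external input (Cohn) for a much shorter, effective proof, whereas your approach is self-contained relative to Theorem~\ref{thm} but ineffective and considerably more intricate. One small simplification available to you: Theorem~\ref{thm} does not require $k$ to be prime, so you could drop Dirichlet and simply let $k$ run through the relevant residue class modulo $36$.
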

Note that the imaginary quadratic fields $\mathbb{Q}(\sqrt{d})$ and $\mathbb{Q}(\sqrt{d + 1})$ in Theorem \ref{thmc1} are explicitly known (see Remark \ref{rmk2}). On the other hand, Xie and Chao \cite[Theorem 1.2]{XC20} proved 
that for an odd positive integer $n$ and a positive integer $m$, there are infinitely many pairs of imaginary fields $\mathbb{Q}(\sqrt{d})$ and $\mathbb{Q}(\sqrt{d + m})$ whose class numbers are both divisible by $n$.
Very recently using rational points on an elliptic curve with positive Mordell-Weil rank to parametrize imaginary quadratic fields, Iizuka, Konomi and Nakano \cite[Theorem 2]{IKN21} proved that for any given non-zero integers $a, b, a_1$ and $b_1$, there is an infinite family of pairs of distinct quadratic fields $\mathbb{Q}(\sqrt{ad+b})$ and $\mathbb{Q}(\sqrt{a_1d+b_1})$ such that both class numbers are divisible by $3$, $5$ or $7$. 

Here, we utilize Remark \ref{rmk2} to construct an infinite family of certain tuples of imaginary quadratic fields of the form $$\left(\mathbb{Q}(\sqrt{d}), \mathbb{Q}(\sqrt{d+1}), \mathbb{Q}(\sqrt{4d+1}), \mathbb{Q}(\sqrt{2d+4}), \mathbb{Q}(\sqrt{2d+16}), \cdots,  \mathbb{Q}(\sqrt{2d+4^m})\right)$$ with $1\leq 4^m\leq 2|d|$ whose class numbers are all divisible by an odd integer $n\geq 3$. More precisely we prove: 
 
\begin{thm}\label{thm3}
If $k\geq 3, n\geq 3$ are odd integers and $d=(1-2k^n)^n$, then the class number of $\mathbb{Q}(\sqrt{D})$ is divisible by $n$ for $D\in \{d, d+1, 4d+1\}\cup\{2d+4^m : 1\leq 4^m\leq 2|d|\}$. 
\end{thm}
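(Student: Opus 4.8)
The plan is to apply Theorem~\ref{thm} and Remark~\ref{rmk2} to the explicit discriminant $d=(1-2k^n)^n$ and then handle each member of the tuple by reducing it, after clearing square factors, to a field of the shape covered by those results. First I would set $N=1-2k^n$, so that $d=N^n$ and $N<0$ since $k\geq 3$; note $|d|=|N|^n$ and $1-2d = 1-2N^n$, which is precisely an integer of the form $1-2k_1^{n}$ with $k_1=|N|=2k^n-1\geq 53$ odd. Thus $\mathbb{Q}(\sqrt{d})=\mathbb{Q}(\sqrt{-(2N^n-1)^{}})$... more cleanly, since $d=N^n$ with $n$ odd, the square-free part of $d$ equals the square-free part of $N$, and because $N=1-2k^n$ with $k$ odd, part~(i) of Theorem~\ref{thm} (equivalently Remark~\ref{rmk2}(I)) gives $n\mid h(\mathrm{sqfree}(N))=h(d)$. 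This disposes of the case $D=d$.

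Next I would treat $D=d+1$ and $D=4d+1$ simultaneously. We have $d+1 = N^n+1 = (N+1)(N^n-N^{n-1}+\cdots+1)$, and $N+1 = 2-2k^n = 2(1-k^n)$; more to the point, $d+1 = (1-2k^n)^n+1$. Here I expect the identity $a^n+1=(a+1)(a^{n-1}-\cdots+1)$ for odd $n$ to be the key algebraic input, together with the observation that $d+1$ and $4d+1$ should each, up to a rational square, equal $\ell^{2m}-2k^n$ for a suitable choice of the (prime) $\ell$, exponent $m$, and base $k$; then Theorem~\ref{thm}(ii) or (iii) applies. For $4d+1 = 4N^n+1 = (2N^{(n+1)/2})^2 - (2N^{(n+1)/2}\cdot\text{something})$... — actually the cleanest route is $4d+1 = (2k^n-1)^2 \cdot(\cdots)$-type manipulation, writing $4d+1 = 4(1-2k^n)^n+1$ and checking this is $1 - 2\tilde{k}^{\,n}$ times a square, or directly of the form $\ell^{2m}-2k^n$. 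I would verify the congruence conditions ``$k\equiv 3\pmod 4$'' or the non-square conditions in Theorem~\ref{thm} for these specific values; since $1-2k^n\equiv 3\pmod 4$ when $k$ is odd, the relevant base will be $\equiv 3\pmod 4$ and we land in the unconditional branch $n\mid h(-d)$.

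For the family $D=2d+4^m$ with $1\leq 4^m\leq 2|d|$, the plan is to write $2d+4^m = 2N^n + 2^{2m} = -(2^{2m}-2\cdot(-N)^{n}) $ up to sign, i.e. $2d+4^m = -\big(2\,(2k^n-1)^n - 2^{2m}\big) = -\big(2\tilde k^{\,n} - \ell^{2m'}\big)$ with $\ell=2$, $\tilde k = 2k^n-1$, and $2m'=2m$, so that $\mathbb{Q}(\sqrt{2d+4^m}) = \mathbb{Q}\big(\sqrt{\ell^{2m}-2\tilde k^{\,n}}\,\big)$ with $\ell=2$. The constraint $1\leq 4^m\leq 2|d|$ is exactly what guarantees $\ell^{2m}=2^{2m}<2\tilde k^{\,n}=2|d|$ (with care at the endpoint), so the hypothesis $\ell^{2m}<2k^n$ of Theorem~\ref{thm} holds, and $\gcd(2,\tilde k)=1$ since $\tilde k$ is odd. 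Then Remark~\ref{rmk2}(II) / Theorem~\ref{thm}(ii) gives $n\mid h(2d+4^m)$ once we know either $3\nmid n$ or $\tilde k\equiv 1\pmod 4$; since $\tilde k = 2k^n-1 \equiv 1\pmod 4$ (as $k$ is odd, $k^n$ is odd, $2k^n\equiv 2\pmod 4$, so $2k^n-1\equiv 1\pmod 4$), the condition $\tilde k\equiv 1\pmod 4$ holds automatically, and we get $n\mid h(D)$ unconditionally for all $m$ in the allowed range. Finally I would note that letting $k$ (or $n$) vary produces infinitely many such tuples, with the fields pairwise distinct for large parameters by comparing square-free parts / sizes of the radicands.

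The main obstacle I anticipate is the bookkeeping for $d+1$ and $4d+1$: verifying that their square-free parts really do fall under one of the three cases of Theorem~\ref{thm} requires pinning down the auxiliary prime $\ell$ and exponent $m$ and then confirming the size bound $\ell^{2m}<2k^n$ and the residue/non-square hypotheses. In particular one must rule out the ``$\square$'' exceptional cases and the ``$t=3$'' subtlety; I expect this to go through because the relevant base parameter is forced to be $\equiv 3\pmod 4$ (placing us in the unconditional branch of Theorem~\ref{thm}(ii)--(iii)), but making the choice of $\ell,m$ explicit and checking $4^m\le 2|d|$ at the boundary is where the real care is needed.
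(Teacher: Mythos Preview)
Your treatment of $D=d$ and of $D=2d+4^m$ matches the paper exactly: write $\tilde k=2k^n-1$, note $\tilde k\equiv 1\pmod 4$, and invoke Theorem~\ref{thm}(i) and (ii) (equivalently Remark~\ref{rmk2}(I),(II)). That part is fine.

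The gap is in $D=d+1$ and $D=4d+1$. With $a=2k^n-1$ odd, one has $d+1=1-a^n$ and $4d+1=1-4a^n$. Neither of these numbers can be put in the form $\ell^{2m}-2K^n$ with $K$ odd, as Theorem~\ref{thm} requires: if $m=0$ you would need $a^n=2K^n$ (resp.\ $4a^n=2K^n$), impossible since $a^n$ is odd (resp.\ $K^n$ would be even); and for $m\ge 1$ you would need $2K^n-\ell^{2m}$ to be $\equiv 2\pmod 4$ up to a square factor, whereas $a^n-1$ and $4a^n-1$ have the wrong $2$-adic shape. So your hoped-for reduction to Theorem~\ref{thm} for these two members cannot succeed, and the ``unconditional branch'' discussion is moot because no admissible $(\ell,m,K)$ exists in the first place.

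The paper handles these two cases with \emph{different} inputs: Cohn's theorem that $n\mid h(1-a^n)$ for odd $a\ge 3$ and odd $n\ge 3$ (Theorem~\ref{thmco}), and Louboutin's theorem that $n\mid h(1-4b^n)$ for $b\ge 2$ and odd $n\ge 3$ (Theorem~\ref{thmlo}). Once $a=2k^n-1\ge 53$ is observed, both apply immediately to $d+1$ and $4d+1$. You should replace the attempted ``$\ell^{2m}-2k^n$'' bookkeeping for these two discriminants by direct citations of those results.
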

%
%
  
We need the following two results to prove Theorem \ref{thm3}. 
\begin{thma}[{\cite[Corollary 1]{CO03}}]\label{thmco} 
For  any odd integers  $n\geq 3$ and $a\geq 3$, the class number of the imaginary quadratic field $\mathbb{Q}(\sqrt{1-a^n})$ is divisible by $n$, except for $(n,a)=(5,3)$.
\end{thma}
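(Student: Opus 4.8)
The plan is to reduce each of the four shapes of $D$ to a field whose $n$-divisibility has already been settled, using repeatedly that $n$ is odd (so $n$-th powers can be pulled out of radicals). Throughout set $c=1-2k^{n}$ and $a=2k^{n}-1=-c$, so that $a\ge 2\cdot 3^{3}-1=53$ is odd, with $a\equiv 1\pmod 4$ since $2k^{n}\equiv 2\pmod 4$, and $d=c^{n}=-a^{n}$, whence $|d|=a^{n}$. Because $a^{n}>1$, every entry in the list is negative, so all the $\mathbb{Q}(\sqrt{D})$ are genuinely imaginary quadratic fields.

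First I would dispose of $D=d$ and $D=d+1$. For $D=d=-a^{n}=c\cdot\big(c^{(n-1)/2}\big)^{2}$ the oddness of $n$ gives $\mathbb{Q}(\sqrt{d})=\mathbb{Q}(\sqrt{c})=\mathbb{Q}(\sqrt{1-2k^{n}})$, so $n\mid h(d)$ by Remark \ref{rmk2}(I) (equivalently, by Theorem \ref{thm}(i) applied with $\ell^{2m}=1$). For $D=d+1=1-a^{n}=1-(2k^{n}-1)^{n}$, Theorem \ref{thmco} applies with base $a\ge 53\ge 3$; its only exception $(n,a)=(5,3)$ cannot occur, since $a=3$ would force $k^{n}=2$. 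Hence $n\mid h(d+1)$.

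Next I would treat the family $D=2d+4^{m}$ (with $m\ge 1$, matching the displayed tuple). Writing $4^{m}=2^{2m}$ and $2d=-2a^{n}$ gives the key identity $2d+4^{m}=2^{2m}-2a^{n}$, which is exactly of the shape $\ell^{2m}-2K^{n}$ with $\ell=2$ and $K=a$. The hypotheses of Theorem \ref{thm}(ii) all hold: $K=a\ge 3$ is odd, $\gcd(K,\ell)=\gcd(a,2)=1$, and $\ell^{2m}=2^{2m}<2a^{n}=2K^{n}$ because $4^{m}\le 2|d|=2a^{n}$ while $2a^{n}$ (being $2$ times an odd number greater than $1$) is not a power of $2$, forcing the inequality to be strict. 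Since $K=a\equiv 1\pmod 4$, the first alternative of Theorem \ref{thm}(ii) gives $n\mid h(2d+4^{m})$ for every admissible $m$, regardless of whether $3\mid n$. (Should the edge value $m=0$ be intended as well, then $2d+1=1-2a^{n}$ is of the form $1-2K^{n}$ and is covered instead by Remark \ref{rmk2}(I).)

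The remaining case $D=4d+1$ is where I expect the main obstacle. Here $4d+1=1-4a^{n}$, and the factor $4a^{n}=2\cdot(2a^{n})$ cannot be written as $2K^{n}$ for an integer $K$, since that would require $K^{n}=2a^{n}$ with $a$ odd; thus $4d+1$ is \emph{not} of the form $\ell^{2m}-2k^{n}$ and Theorem \ref{thm} does not apply directly. This is precisely the $\lambda=2$ situation of \eqref{eqi1}, and it is for this case that the second auxiliary result is needed: a Cohn-type statement yielding $n\mid h(1-4a^{n})$ for odd $a\ge 3$, with the small exceptions again excluded by $a\ge 53$. If that result is invoked, the case is immediate. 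Failing that, I would re-run the argument of \S4 through a $\lambda=2$ analogue of Lemma \ref{lemBS}, starting from the representation $1^{2}+(4a^{n}-1)\cdot 1^{2}=4a^{n}$ and noting that now $4d+1\equiv 5\pmod 8$, so $2$ is \emph{unramified} in $\mathbb{Q}(\sqrt{1-4a^{n}})$; the associated Lehmer pair then has parameters reflecting this changed factorization of $2$, and Theorem \ref{thmBH} together with Lemmas \ref{lemVO}--\ref{lemHO} would force the exponent $t$ down to $1$, giving $n\mid h(4d+1)$. Tracking the modified ramification at $2$ and ruling out the small-$t$ defective pairs, exactly as in Case II of \S4, is the technical heart of this final case.
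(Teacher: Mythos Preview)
Your proposal does not address the stated target. The statement in question is Theorem~\ref{thmco} (Cohn's result on $\mathbb{Q}(\sqrt{1-a^{n}})$), but what you have written is a proof of Theorem~\ref{thm3} (the tuple theorem for $D\in\{d,d+1,4d+1\}\cup\{2d+4^{m}\}$), in which Theorem~\ref{thmco} is merely \emph{invoked} as a black box for the case $D=d+1$. Moreover, Theorem~\ref{thmco} is quoted in the paper without proof from \cite{CO03}; there is no ``paper's own proof'' of it to compare against. So as a proof of the displayed statement, the proposal is simply off-target.

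If the intended target was actually Theorem~\ref{thm3}, then your outline is essentially the same as the paper's proof: set $a=2k^{n}-1$, use Remark~\ref{rmk2}(I) for $D=d$, use Theorem~\ref{thmco} for $D=d+1$, and apply Theorem~\ref{thm}(ii) with $\ell=2$ and $K=a\equiv 1\pmod 4$ for $D=2d+4^{m}$. The one point where you hesitate, $D=4d+1=1-4a^{n}$, is handled in the paper not by Theorem~\ref{thmco} or by re-running the Lehmer/primitive-divisor machinery, but by citing Louboutin's result (Theorem~\ref{thmlo}): for odd $n\ge 3$ and any integer $b\ge 2$, the class group of $\mathbb{Q}(\sqrt{1-4b^{n}})$ has an element of order $n$. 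This applies directly with $b=a$, so the elaborate $\lambda=2$ analogue you sketch is unnecessary.
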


\begin{thma}[{\cite[Theorem 1]{LO09}}]\label{thmlo}
For   any odd integer $n\geq 3$ and any integer $b\geq  2$, the ideal class groups of the imaginary quadratic fields $\mathbb{Q}(\sqrt{1-4b^n})$ contain an element of order $n$.
\end{thma}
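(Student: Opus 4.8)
The plan is to transplant the Lehmer-sequence argument used for Theorem~\ref{thm} onto the equation attached to $\mathbb{Q}(\sqrt{1-4b^n})$, namely $x^2+dy^2=4b^n$ in place of $x^2+dy^2=2k^z$. First I would let $-d$ be the square-free part of $1-4b^n$ and write $4b^n-1=ds^2$. Since $4b^n-1\equiv 3\pmod 4$, this forces $s$ odd and $d\equiv 3\pmod 4$, so $-d\equiv 1\pmod 4$, the ring of integers of $K=\mathbb{Q}(\sqrt{-d})$ is $R=\mathbb{Z}[(1+\sqrt{-d})/2]$, and $\gcd(d,b)=1$ (because $d\mid 4b^n-1$). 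Setting $\alpha=(1+s\sqrt{-d})/2\in R$, one has $N(\alpha)=(1+ds^2)/4=b^n$ and $\alpha+\bar\alpha=1$, so $\alpha$ and $\bar\alpha$ generate coprime ideals; factoring $(\alpha)(\bar\alpha)=(b)^n$ then gives $(\alpha)=\mathfrak a^n$ for an ideal $\mathfrak a$ with $N(\mathfrak a)=b$. This is the analogue of Lemma~\ref{lemBS}, and it is cleaner here because $2$ is unramified, so no $\sqrt 2$-normalisation intervenes. Writing $e$ for the order of $[\mathfrak a]$ in the class group, we get $e\mid n$ and $e\mid h(-d)$, and the theorem reduces to proving $e=n$.

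Suppose $e<n$, put $t=n/e>1$ (odd), and let $\mathfrak a^e=(\gamma)$ with $N(\gamma)=b^e$; then $(\alpha)=(\gamma)^t$ forces $\alpha=\zeta\gamma^t$ for a unit $\zeta$. Assume for the moment $d>3$, so $\zeta=\pm1$ (note $\mathbb{Q}(i)$ is excluded since $d\equiv3\pmod4$). Writing $\gamma=(u+v\sqrt{-d})/2$, coprimality of $\mathfrak a$ and $\bar{\mathfrak a}$ gives $\gcd(u,v)=1$, hence $u,v$ are both odd and $\gcd(dv,b)=1$. I would then form the pair $(\gamma,-\bar\gamma)$: a direct computation gives $(\gamma+(-\bar\gamma))^2=-dv^2$, $(\gamma-(-\bar\gamma))^2=u^2$ and $\gamma\cdot(-\bar\gamma)=-b^e$, so $(\gamma,-\bar\gamma)$ is a Lehmer pair with parameters $(-dv^2,u^2)$, the integers $-dv^2$ and $-b^e$ being coprime by $\gcd(dv,b)=1$ and $-\gamma/\bar\gamma$ being a non-root of unity since $N(\gamma)=b^e>1$. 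As $t$ is odd,
\[
\mathcal L_t(\gamma,-\bar\gamma)=\frac{\gamma^t+\bar\gamma^t}{u}=\frac{2\,\Re(\gamma^t)}{u}=\frac{\pm1}{u},
\]
where I used $\gamma^t=\pm\alpha$ and $\Re(\alpha)=\tfrac12$. Since $\mathcal L_t$ is a nonzero rational integer, this forces $u=\pm1$ and $|\mathcal L_t(\gamma,-\bar\gamma)|=1$; that is, $\mathcal L_t$ has no primitive divisor.

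Now the sporadic classification closes the argument. By Theorem~\ref{thmBH}, $t\le 30$, and since $u^2=1$ the parameters $(-dv^2,1)$ are equivalent (multiply the pair by $\sqrt{-1}$) to $(dv^2,-1)$. Scanning Lemma~\ref{lemVO} for a second parameter equal to $-1$ leaves only $t=9$ and $t=15$, each forcing $(dv^2,-1)=(7,-1)$, whence $N(\gamma)=(1+7)/4=2=b^e$, so $(b,e)=(2,1)$ and $n=t\in\{9,15\}$; but neither $4\cdot2^9-1=2047$ nor $4\cdot2^{15}-1=131071$ is of the form $7s^2$, a contradiction. For $t=5$, combining Lemma~\ref{lemBH} with Lemma~\ref{lemHO} shows that the only parameter match has $d=3$, which is excluded in the present case $d>3$. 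For $t=3$ I would simply equate real parts in $\alpha=\pm\gamma^3$ to get $1-3dv^2=\pm4$, i.e.\ $3dv^2\in\{-3,5\}$, which is impossible. The remaining odd $t\in\{7,11,13,17,\dots,29\}$ admit no defective pair with second parameter $-1$, so none of them can occur. Hence $t=1$, $e=n$, and $[\mathfrak a]$ has order exactly $n$.

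Two points carry the real weight and are where I expect the main obstacle. The first is keeping the finite analysis above airtight, checking every surviving parameter match against $N(\gamma)=b^e$ with $b\ge2$ and $n\ge3$. The more delicate one is the field $\mathbb{Q}(\sqrt{-3})$ (the case $d=3$), the only field with $d\equiv3\pmod4$ having units beyond $\pm1$: there $\zeta$ may be a primitive sixth root of unity, the identity $\Re(\gamma^t)=\pm\tfrac12$ can fail, and the clean bound $|\mathcal L_t|=1/|u|$ is lost. I would dispose of this field separately, reducing $\zeta$ modulo $n$-th powers in $\mu_6$ to finitely many normal forms and, in the residual branches, settling the insolubility of $4b^n-1=3s^2$ for $b\ge2$ and $n\ge3$ by the same primitive-divisor input. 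Everything upstream---the factorisation $(\alpha)=\mathfrak a^n$, the construction of the Lehmer pair, and the norm identity for $\mathcal L_t$---is routine once the square-free reduction is in place.
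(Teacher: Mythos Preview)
The paper does not prove Theorem~\ref{thmlo}; it is quoted verbatim from Louboutin \cite{LO09} and invoked as a black box in the proof of Theorem~\ref{thm3}. So there is no ``paper's own proof'' to compare against.

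That said, your proposal is a coherent attempt to \emph{reprove} Louboutin's result by transplanting the Lehmer-sequence machinery of \S4 to the equation $x^{2}+dy^{2}=4b^{n}$. For $d>3$ the argument is essentially sound: the factorisation $(\alpha)=\mathfrak a^{n}$ with $\alpha=(1+s\sqrt{-d})/2$ is clean because $2$ is unramified, the pair $(\gamma,-\bar\gamma)$ is a genuine Lehmer pair (your stated reason ``$N(\gamma)>1$'' for $-\gamma/\bar\gamma$ not being a root of unity is not the right one, but since $u,v$ are both odd one has $\gamma\ne\pm\bar\gamma$, which suffices when $d>3$), and the finite check against Lemmas~\ref{lemVO}--\ref{lemHO} with second parameter $u^{2}=1$ goes through. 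One small inaccuracy: in the $t=5$ case the parameter conditions $L_{k+\varepsilon}=1$ and $5F_{k+\varepsilon}=1$ actually have no admissible solutions at all, so no ``$d=3$'' case arises there.

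The real gap is exactly the one you flag: the field $\mathbb{Q}(\sqrt{-3})$. If $4b^{n}-1=3s^{2}$ had a solution with $b\ge 2$ and odd $n\ge 3$, Theorem~\ref{thmlo} would be \emph{false} for that pair, since $h(-3)=1$. Hence this Diophantine statement is not a technicality but is equivalent to the theorem in that regime, and it is not delivered by the primitive-divisor input of \S2 alone (the extra units $\pm\omega,\pm\omega^{2}$ genuinely break the identity $2\Re(\gamma^{t})=\pm1$). Louboutin's original argument handles this differently; if you want a self-contained proof along your lines you must supply an independent treatment of $3s^{2}+1=4b^{n}$, e.g.\ via factorisation in $\mathbb{Z}[\omega]$ and a separate case analysis over the coset representatives of $\mu_{6}/\mu_{6}^{n}$.
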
 
\subsection*{Proof of Theorem \ref{thm3}}
Let $n\geq 3$ be an odd integer. For any odd integer $k\geq 3$, define  the negative integer $d$ by $$d:=(1-2k^n)^n.$$
Then by Remark \ref{rmk2}, we have 
$$n\mid h(1-2k^n)=h(d).$$
Now we set
$$a:=2k^n-1.$$
Then by $k\geq 3$ and $n\geq 3$, we have $a\geq 53 $ and $$d+1=(1-2k^n)^n+1=(-a)^n+1=1-a^n.$$
 Therefore by Theorem \ref{thmco}, we have $n\mid h(d+1)$.  
 
 For any integer $m\geq 0$, we have \begin{align*}
 & 4d+1=4(1-2k^n)^n+1=4(-a)^n+1=1-4a^n,\\
& 2d+4^m=2(1-2k^n)^n+4^m=2(-a)^n+4^m=2^{2m}-2a^n.
\end{align*}
It follows from Theorem \ref{thmlo} and Remark \ref{rmk2} that $n\mid h(4d+1)$ and $n\mid h(2d+4^m)$, respectively provided $1\leq 4^m<2|d|$. 


\section*{Acknowledgement}
The authors are grateful to Professor Yasuhiro Kishi for his valuable suggestion to improve the presentation of the paper. The authors are thankful to Professor Y. Iizuka for providing a copy of \cite{IKN21}.  The authors gratefully acknowledge the anonymous referee for his/her valuable suggestion that immensely improved the presentation of the paper. 
This work was supported by SERB MATRICS Project (No. MTR/2021/000762), Govt. of India.


\end{document}